\newdimen\AAdi%
\newbox\AAbo%
\def\AAk#1#2{\s_etbox\AAbo=\hbox{#2}\AAdi=\wd\AAbo\kern#1\AAdi{}}%
\def\AAr#1#2#3{\s_etbox\AAbo=\hbox{#2}\AAdi=\ht\AAbo\raise#1\AAdi\hbox{#3}}%
\font\tenmsb=msbm10 at 12pt
\font\sevenmsb=msbm7 at 8pt
\font\fivemsb=msbm5 at 6pt
\def\Bbb#1{{\tenmsb\fam\msbfam#1}}
\newcommand{\beq}{\begin{equation}}
\newcommand{\eeq}{\end{equation}}
\newcommand{\beqr}{\begin{eqnarray}}
\newcommand{\eeqr}{\end{eqnarray}}
\newcommand{\ba}{\begin{array}}
\newcommand{\ea}{\end{array}}
\newtheorem{thm}{Theorem}[section]
\newtheorem{cor}[thm]{Corollary}
\newtheorem{rem}[thm]{Remark}
\newtheorem{eg}{Example}[section]
\newtheorem{conj}{Conjecture}
\numberwithin{equation}{section}
\def \vs{\vspace*{0.2cm}}
\def \ds{\displaystyle}
\def \p{\partial}
\def \l{\lambda}
\def \D{\Delta}
\def \d{\delta}
\def \s{\sigma}
\def \e{\varepsilon}
\def \a{\alpha}
\def\la{\langle}\def\ra{\rangle}
\def\cal{\mathcal}
\def\n{\nabla}
\def\M1{{{\cal M}}_1}
\def\n{\nabla}
\def\div{\hbox{div}\,}
\def\circledwedge{\setbox0=\hbox{$\bigcirc$}\relax \mathbin {\hbox
to0pt{\raise.5pt\hbox to\wd0{\hfil $\wedge$\hfil}\hss}\box0 }}
\journal{}
\begin{document}

\begin{frontmatter}

\title{ Rigidity theorems of spacelike entire self-shrinking graphs in the pseudo-Euclidean space \tnoteref{QS}}

\author[whu1,whu2]{Hongbing Qiu}
\ead{hbqiu@whu.edu.cn}

\author[whu1,whu2]{Linlin Sun\corref{sll1}}
\ead{sunll@whu.edu.cn}

\tnotetext[QS]{This research is partially supported by the National Natural Science Foundation of China (Nos. 11771339, 11801420), Fundamental Research Funds for the Central Universities (No. 2042019kf0198) and the Youth Talent Training Program of Wuhan University. The authors would like to thank Professor Y. L. Xin for his valuable suggestions and constant support. The first author also would like to express his gratitude to Professor Tobias H. Colding for his invitation, to MIT for their hospitality. The second author thanks the Max Planck Institute for Mathematics in the Sciences for good working conditions when this work carried out.}

\address[whu1]{School of Mathematics and Statistics, Wuhan University, Wuhan 430072, China}
\address[whu2]{Hubei Key Laboratory of Computational Science, Wuhan University, Wuhan, 430072, China
}

\cortext[sll1]{Corresponding author.}

\begin{abstract}
In this paper, we firstly establish a new volume growth estimate for spacelike entire graphs in the pseudo-Euclidean space $\mathbb{R}^{m+n}_n$. 
Then by using this volume growth estimate and the Co-Area formula, we prove various rigidity results for spacelike entire self-shrinking graphs.

\end{abstract}

\begin{keyword}
 Pseudo-distance, entire graph, self-shrinker, rigidity, volume growth

 \MSC[2010] 53C40, 53C24

\end{keyword}

\end{frontmatter}

\section{Introduction}
The pseudo-Euclidean space $\mathbb{R}^{m+n}_{n}$ of index $n$ is the linear
space $\mathbb{R}^{m+n}$ with coordinates\ $\left(x_1, x_2, \dotsc, x_{m+n}\right)$ and indefinite  metric
\begin{align*}
  ds^2 = \sum^m _{i=1} (dx_i)^2 - \sum^{m+n}_{\a=m+1}(dx_\a)^2.
\end{align*}
For $a=\left(a_1,\dotsc, a_{m+n}\right)\in\mathbb{R}^{m+n}_n$ and $b=\left(b_1,\dotsc, b_{m+n}\right)\in\mathbb{R}^{m+n}_n$, introduce
\begin{align*}
\left\langle a, b\right\rangle:=\sum_{i=1}^m a_ib_i-\sum_{\alpha=m+1}^{m+n}a_{\alpha}b_{\alpha},\quad |a|^2:=\left\langle a, a\right\rangle,\quad\|a\|:=\sqrt{|\left\langle a, a\right\rangle|}.
\end{align*}
An $m$-dimensional submanifold $M^m$ in $\mathbb{R}^{m+n}_n$ is called spacelike if the induced metric on $M^m$ is a Riemannian metric.
The mean curvature flow (MCF) in the pseudo-Euclidean space  is a one-parameter family of immersions
$X_t=X(\cdot, t): M^m \rightarrow  \mathbb{R}^{m+n}_n$  with the
corresponding image $M_t=X_t(M^m)$ such that
\begin{align}\label{ODE-add}
    \begin{cases}
    \dfrac{\partial}{\partial t}X(x,t)=H(x,t),&(x,t)\in M^m\times[0,T);\\
    X(x,0)=X(x),&x\in M^m,
    \end{cases}
\end{align}
is satisfied, here $H(x, t)$ is the mean curvature vector of $M_t$
at $X(x, t)$ in  $\mathbb{R}^{m+n}_n$. There are many interesting and essential results on the mean curvature flow  of spacelike submanifolds  in certain Lorentzian manifolds (see e.g. \cite{Eck1, Eck2, Eck3, Hal, Huang, Xin2}).

Let us firstly recall some facts in Euclidean spaces, Chern \cite{Chern} showed that entire graphs of constant mean curvature (CMC) in $\mathbb{R}^{m+1}$ are minimal. It is well known that these graphs must be hyperplanes for $m\leq 7$ (see Bernstein \cite{Bernstein} for $m=2$, De Giorgi \cite{Giorgi} for $m=3$, Almgren \cite{Almgren} for $m=4$ and Simons \cite{Simons} for $m\leq7$) and there are counterexamples for $m>7$ (see Bombieri-De Giorgi-Giusti \cite{BGG}). 

In Minkowski space $\mathbb{R}^{m+1}_1$, Calabi \cite{Cal} proposed the Bernstein problem for spacelike maximal hypersurfaces and proved that such hypersurfaces have to be hyperplanes when $m\leq 4$. Cheng-Yau \cite{CY} solved the problem for all $m$, in sharp contrast to the situation of the Euclidean space. Later, Ishihara \cite{Ish88} and Jost-Xin \cite{JX} generalized the results to higher codimension. The rigidity problem for spacelike submanifolds with parallel mean curvature was studied in \cite{JX, Xin91, XY97}.

On the other hand, by the work of Colding-Minicozzi \cite{CM} (see also \cite{Ang}), we know that, in the Euclidean space, minimal submanifolds and self-shrinkers share many geometric properties. Recall that $M^m$ is said to be a self-shrinker in $\mathbb{R}^{m+n}_n$ if
\begin{equation}\label{Def}
H=-\frac{1}{2}X^N,
\end{equation} which is an
important class of solutions to \eqref{ODE-add}, where $X^N$ is the normal part of $X$.  So it is natural to consider the rigidity of spacelike self-shrinkers in the pseudo-Euclidean space. Under the global conditions of Lagrangian entire graph or complete with the induced metric, there are plenty of related works, see e.g. \cite{Adames, CCY, CJQ2, DX, HW, LX}. It should mention that Chen-Qiu \cite{CQ} proved that only the affine planes are the  complete $m$-dimensional spacelike self-shrinkers in the pseudo-Euclidean space $\mathbb{R}^{m+n}_n$.

In this paper, we further study the geometry of the $m$-dimensional spacelike entire self-shrinking graphs in $\mathbb{R}^{m+n}_n$. 
By establishing a new volume growth estimate  (see Theorem \ref{thm-volume}) for the spacelike entire graphs  and the Co-Area formula (see Federer \cite{Federer} for Lipschitz functions or Fleming-Rishel \cite{FR} for BV functions), we give various growth estimates on the mean curvature and the $w$-function when the spacelike self-shrinking graph is not a linear subspace, these lead to rigidity results if the growth conditions are not satisfied.
\begin{thm}\label{thm-C1}

Let $X: M^m \to \mathbb{R}^{m+n}_n$ be a spacelike entire self-shrinking graph. Assume that
the origin $o\in M^m$ and $M^m$ is not a linear subspace. Then  the mean curvature satisfies 
\beq\label{eqn-H0}
\limsup_{R\to\infty}\dfrac{R^2}{\log\left(\int_{D_R}\|H\|^4e^{-\frac{z}{4}}\right)}\leq4\sqrt{m},
\eeq
where $D_R:= M^m \cap \{ p \in \mathbb{R}^{m+n}_n: z(p) \leq R^2  \}$ and $z=|X|^2$.
\end{thm}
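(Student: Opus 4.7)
The plan is to proceed by contradiction, combining weighted integral identities for self-shrinkers with the Co-Area formula and the new volume growth estimate of Theorem~\ref{thm-volume}. First I will derive the basic pointwise identities on $M$. The self-shrinker equation $H=-\tfrac{1}{2}X^N$, together with $\nabla^M z = 2X^T$ and the submanifold formula $\Delta|X|^2 = 2m + 2\langle X,H\rangle$, gives
\[
\Delta z = 2m + 4\|H\|^2, \qquad |\nabla z|^2 = 4|X^T|^2 = 4z + 16\|H\|^2,
\]
and hence
\[
\mathrm{div}\bigl(e^{-z/4}\nabla z\bigr) = (2m - z)\,e^{-z/4}.
\]
Integrating this over $D_R$ and applying the divergence theorem produces the fundamental boundary-to-interior identity
\[
e^{-R^2/4}\int_{\partial D_R} |\nabla z|\,dA \;=\; \int_{D_R} (2m - z)\,e^{-z/4}\,dV.
\]

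Next, I will suppose for contradiction that $\limsup_{R\to\infty} R^2/\log I(R) > 4\sqrt{m}$, writing $I(R) := \int_{D_R}\|H\|^4\,e^{-z/4}\,dV$; this produces $c>4\sqrt{m}$ and $R_k\to\infty$ with $I(R_k)\le e^{R_k^2/c}$. The target conclusion is $\|H\|\equiv 0$ on $M$, which via the self-shrinker equation forces $X^N\equiv 0$, so $M$ is a cone through the origin; combined with $M$ being a smooth entire spacelike graph containing $o$, this makes $M$ a linear subspace, contradicting the hypothesis. The reduction rests on three tools: the Co-Area formula
\[
\int_{D_R} f\,|\nabla z|\,dV \;=\; \int_{0}^{R^2}\!\int_{z^{-1}(t)} f\,dA\,dt
\]
to pass between bulk integrals on $D_R$ and level-set integrals; the AM--GM inequality $\Delta z = 2m + 4\|H\|^2 \ge 4\sqrt{2m}\,\|H\|$, which is the source of the factor $\sqrt{m}$ in the target exponent; Cauchy--Schwarz in the form
\[
\Bigl(\int_{D_R}\|H\|^2 e^{-z/4}\,dV\Bigr)^{2} \;\le\; I(R)\,\int_{D_R} e^{-z/4}\,dV;
\]
and Theorem~\ref{thm-volume} to bound $\mathrm{Vol}(D_R)$ and hence $\int_{D_R} e^{-z/4}\,dV$ in terms of $R$.

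The main obstacle will be tracking the sharp constant $4\sqrt{m}$ through this chain: the AM--GM above supplies $\sqrt{m}$, but this factor must survive the Cauchy--Schwarz and Co-Area manipulations without deterioration, and the sharpness ultimately hinges on the precise form of Theorem~\ref{thm-volume}. The final step, converting the weighted $L^{2}$-smallness of $\|H\|$ into the pointwise vanishing $H\equiv 0$, will use the drift Laplacian $L := e^{z/4}\mathrm{div}(e^{-z/4}\nabla\cdot) = \Delta - \tfrac{1}{2}\langle X^{T},\nabla\cdot\rangle$, which is self-adjoint with respect to $e^{-z/4}\,dV$ and satisfies the simple relation $Lz = 2m - z$, together with the smoothness of the entire graph.
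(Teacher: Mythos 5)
Your proposal has a genuine gap: none of the tools you list can actually produce the exponential lower bound on $\int_{D_R}\|H\|^4e^{-z/4}$ that the theorem asserts. The identity $\mathrm{div}(e^{-z/4}\nabla z)=(2m-z)e^{-z/4}$ does not involve $H$ at all once the self-shrinker equation is absorbed into $\Delta_V z=2m-z$, so the boundary-to-interior relation built from it gives no leverage on the mean curvature. The Cauchy--Schwarz inequality $\bigl(\int\|H\|^2e^{-z/4}\bigr)^2\leq I(R)\int e^{-z/4}$ together with the volume growth estimate produces an \emph{upper} bound on $\int\|H\|^2e^{-z/4}$, which is the wrong direction: you need a \emph{lower} bound on $I(R)$. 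And the AM--GM inequality $\Delta z\geq 4\sqrt{2m}\,\|H\|$ produces the constant $\sqrt{2m}$, not $\sqrt{m}$, and in any case appears inside a positive quantity that is integrated away on the left side, so it cannot govern an exponential rate.

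The ingredient you are missing is the Bochner-type formula for $\|H\|^2$ under the drift Laplacian. From \eqref{eqn-H1} one computes
\[
\Delta_V\|H\|^2=\|H\|^2+2|A^H|^2+2\|\nabla H\|^2\geq 2|A^H|^2\geq \tfrac{2}{m}\|H\|^4,
\]
and it is the last Cauchy inequality $|A^H|^2\geq\|H\|^4/m$ that is the true source of $\sqrt{m}$. Integrating $\Delta_V\|H\|^2\,e^{-z/4}$ over $D_R$ produces a boundary term which, after Cauchy--Schwarz on $\partial D_R$ and an application of the Co-Area formula, yields a differential inequality relating $F(R)=\int_{D_R}|A^H|^2e^{-z/4}$ and $G(R)=\int_{D_R}(z+4\|H\|^2)^2e^{-z/4}$, specifically $4F(R)^2\leq F'(R)\cdot R^{-2}G'(R)$. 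From this one first shows $\int_M|A^H|^2e^{-z/4}=\infty$ (otherwise the ODE inequality gives a contradiction using the finiteness of $\int_M z^2 e^{-z/4}$ from the volume estimate), and then sharpens the ODE argument to get $\int_{D_R}|A^H|^2e^{-z/4}\gtrsim \exp\bigl(\tfrac{1-2\varepsilon}{4\sqrt{m}}R^2\bigr)$, which delivers \eqref{eqn-H0} after letting $\varepsilon\to 0$. Without this chain, the contradiction strategy you outline has no mechanism: "$I(R)$ grows slowly along a subsequence" cannot be upgraded to $H\equiv 0$ using only Cauchy--Schwarz and a level-set identity for $z$.
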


\begin{rem}

Clearly, Theorem \ref{thm-C1} implies a rigidity result for the  spacelike entire self-shrinking graph if 
\[
 \limsup_{R\to\infty}\dfrac{R^2}{\log\left(\int_{D_R}\|H\|^4e^{-\frac{z}{4}}\right)}>4\sqrt{m}.
\]
In particular, by the above Theorem \ref{thm-C1} and Theorem \ref{thm-volume} which is stated in section 3, if $\|H\|^2\leq Ce^{\alpha z}$ for some constant $C>0$ and  $\alpha<\frac{1}{8}$, then the  spacelike entire self-shrinking graph has to be a linear subspace (see also  Theorem 1.1 in \cite{LX} or Theorem 2.1 in \cite{LQ}). Moreover, the growth condition can be weakened as  $\|H\|^2\leq Ce^{\alpha z}$ for $\alpha<\frac16+\frac{1}{6\sqrt{m}}$, see Corollary \ref{cor-R0} in section 5.
\end{rem}

\begin{thm}\label{thm-RW}

Let $X: M^m \to \mathbb{R}^{m+n}_n$ be a spacelike  entire self-shrinking graph. Assume that 
the origin $o\in M^m$ and $M^m$ is not a linear subspace. Then  the $w$-function satisfies 
\beq\label{eqn-H10}
\limsup_{R\to \infty} \frac{(\log R)^2}{\int_{D_R} w^2 (\log w)^2 e^{-\frac{z}{4}}} < \infty.
\eeq
Here the definition of $w$-function is given in Section 2. 
\end{thm}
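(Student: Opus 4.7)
The plan is to parallel the approach of Theorem~\ref{thm-C1} but with $\log w$ in place of the mean curvature; the weaker $(\log R)^2$ conclusion (compared to the exponential $e^{R^2}$ bound in Theorem~\ref{thm-C1}) is consistent with the milder identity available for $\log w$. The three ingredients are: (i) a Simons-type identity for $\log w$ on spacelike self-shrinking graphs; (ii) weighted integration by parts against the drift Laplacian $L := \Delta_M - \tfrac{1}{2}\langle X,\nabla\cdot\rangle$, with respect to which $e^{-z/4}\,dV_M$ is the natural symmetric measure (indeed $L = \Delta_M - \tfrac{1}{4}\langle\nabla z,\nabla\cdot\rangle$ on $M$); and (iii) the Co-Area formula with level function $z$, combined with the volume growth estimate of Theorem~\ref{thm-volume}.

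I would first invoke (or derive from $H = -\tfrac{1}{2}X^N$ and the composition formula for $w$ on a graph) a pointwise identity of the schematic form $L\log w = |\nabla\log w|^2 + P$, where $P \ge 0$ is a nonnegative curvature expression that vanishes identically precisely when $M$ is a linear subspace. Since $M$ is assumed not to be a linear subspace, there is a fixed compact set $K\subset M$ on which $P$ is bounded below by a positive constant.

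Next I would test this identity against $\eta_R^2\,e^{-z/4}$ with a logarithmic cutoff $\eta_R := \psi(\log(1+z)/\log R)$, where $\psi$ is smooth, equal to $1$ near $0$ (so that $\eta_R \equiv 1$ on $K$ once $R$ is large enough), and supported in $[0,1]$. Self-adjointness of $L$ together with a Young's inequality absorption of the $|\nabla\log w|^2$ term gives
\[
0 < c_0 \;\le\; \int_M \eta_R^2 P\,e^{-z/4} \;\le\; C\int_M |\nabla\eta_R|^2 (\log w)^2\, e^{-z/4}.
\]
The key observation is that $|\nabla\eta_R|^2 \lesssim |\nabla z|^2/((1+z)^2(\log R)^2)$, and the self-shrinker equation together with the orthogonal decomposition $X = X^T + X^N$ gives $|\nabla z|^2 = 4|X^T|^2 = 4(z + \|X^N\|^2) = 4(z + 4\|H\|^2)$ on $M$. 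Slicing by level sets of $z$ via the Co-Area formula and invoking Theorem~\ref{thm-volume} to control the weighted level-set volumes (including the $\|H\|^2$ contribution) yields
\[
c_0 \;\le\; \frac{C}{(\log R)^2}\int_{D_R} w^2(\log w)^2\, e^{-z/4},
\]
where the factor $w^2$ is introduced harmlessly since $w \ge 1$ on any spacelike graph. Rearrangement then gives \eqref{eqn-H10}.

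The main obstacle is twofold. First, one must verify the Simons-type identity for $\log w$ in Lorentz signature and check that the ``positive curvature'' part $P$ is genuinely nonnegative and stays bounded away from $0$ on a fixed compact set; in pseudo-Euclidean space the signs of individual terms in the Simons formula differ from the Euclidean case because the normal bundle is timelike, so some care is needed. Second, the $\|H\|^2$ contribution to $|\nabla z|^2$ means that the Co-Area step requires not merely a volume bound but control of weighted mean-curvature integrals on level sets of $z$, which is precisely what the ``new volume growth estimate'' of Theorem~\ref{thm-volume} is designed to provide; this is where that theorem enters essentially. Once these two points are in hand, the rest of the argument is a standard drift-weighted cutoff computation.
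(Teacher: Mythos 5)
Your strategy (drift Laplacian, self-adjointness with respect to $e^{-z/4}\,dV$, Theorem~\ref{thm-volume} to handle $|X^T|$) is in the right spirit, and a logarithmic cutoff \emph{could} be made to reproduce the paper's Co-Area/ODE argument. But as written the proof has two genuine errors, not merely ``obstacles to verify.''

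First, the differential inequality you posit, $L\log w = |\nabla\log w|^2 + P$ with $P\ge 0$, is not what the paper has and is stronger than what is available. The paper cites Proposition~3.1 of Liu--Xin to get the \emph{inequality} $\D_V\log w \ge \|B\|^2/w^2$, with no $|\nabla\log w|^2$ term on the right. You treat the $|\nabla\log w|^2$ term as free and then ``absorb'' it; but you have not derived that version, and the absorption step is precisely what would fail if the true bound is only $\D_V\log w\ge\|B\|^2/w^2$.

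Second, and more decisively, even granting your identity, testing against $\eta_R^2\,e^{-z/4}$ as you describe gives (after self-adjointness and Young)
\[
\int_M \eta_R^2\, P\,e^{-z/4} \;\le\; \int_M |\nabla\eta_R|^2\,e^{-z/4},
\]
with no $(\log w)^2$ on the right. Your displayed inequality with the factor $(\log w)^2$ does not follow from the testing you describe; to produce $(\log w)^2$ one must multiply the differential inequality by $\log w$ first, i.e.\ test against $\eta_R^2\log w\,e^{-z/4}$. The paper does exactly this: in its step~\eqref{eqn-H9} the quantity integrated is $\D_V(\log w)\cdot\log w\cdot e^{-z/4}$, which is what generates the $w^2(\log w)^2$ weight via the Cauchy--Schwarz split on the boundary term. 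Without the extra $\log w$ factor you end up bounding $\int_{D_R} w^2\,e^{-z/4}$, which neither implies nor is implied by the statement of the theorem. Finally, the role of Theorem~\ref{thm-volume} is slightly misidentified: you do not need to control weighted mean-curvature integrals on level sets to handle the $\|H\|^2$ in $|\nabla z|^2$; the pointwise bound $|X^T|\le C_2\,w(z+1)$ from~\eqref{eqn-position} (established inside the proof of Theorem~\ref{thm-volume}) already gives $|\nabla\eta_R|^2\lesssim w^2/(\log R)^2$, which is all that is used. The correct fix is: use $\D_V\log w\ge\|B\|^2/w^2$, test against $\eta_R^2\log w\,e^{-z/4}$, and use~\eqref{eqn-position} for the cutoff gradient; then the positivity $\int_M\log w\cdot\|B\|^2/w^2\,e^{-z/4}>0$ when $M$ is not a plane closes the argument, paralleling (in cutoff form) the paper's boundary-term/Co-Area/ODE route.
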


\begin{rem}
Ding-Wang \cite{DW}  showed that the spacelike entire self-shrinking graph satisfying $\lim\limits_{|x|\to\infty}\frac{\log\det(g_{ij}(x))}{|x|}=0$ is a linear subspace (see Theorem 3 in \cite{DW}). By using the above Theorem \ref{thm-RW}, we can improve their result, see the details in the proof of Corollary \ref{cor-DW} in section 5.
\end{rem}

The article will be organized as follows. In the next section, we shall give some preliminaries. In Section 3, we establish  a new volume growth estimate for spacelike entire self-shrinking graphs. 
 Subsequently, in Section 4, we give the proof of Theorem \ref{thm-C1} and Theorem \ref{thm-RW}. Finally, as applications, various rigidity results for the spacelike self-shrinkers are presented in Section 5.

\section{Preliminaries}

Let $M^m$ be an $m$-dimensional spacelike submanifold in $\mathbb{R}^{m+n}_n$. The second fundamental form $B$ of $M^m$ in $\mathbb{R}^{m+n}_n$ is defined by
\begin{align*}
  B_{UW}:= \left(\overline{\n}_U W\right)^N  
\end{align*}
for $U, W \in \Gamma(TM^m)$. We use the notation $( \cdot )^T$ and $(
\cdot )^N$ for the orthogonal  projections into the tangent bundle
$TM^m$ and the normal bundle $NM^m$, respectively. For $\nu \in
\Gamma(NM^m)$ we define the shape operator $A^\nu: TM^m \rightarrow TM^m$
by
\begin{align*}
    A^\nu (U):= -\left(\overline{\n}_U \nu\right)^T.
\end{align*}
We have the following 
\begin{align*}
    \left\langle A^{\nu}(U),W\right\rangle=\left\langle A^{\nu}(W),U\right\rangle=\left\langle B_{UW},\nu\right\rangle.
\end{align*}
Taking the trace of $B$ gives the mean curvature vector $H$ of $M^m$
in $\mathbb{R}^{m+n}_{n}$ and
\begin{align*}
   H:= \hbox{trace} (B) = \sum_{i=1}^m B_{e_i e_i}, 
\end{align*}
where $\{ e_i \}$ is a local orthonormal frame field of $TM^m$. 
The Gauss equation, Cadazzi equation and Ricci equation are (cf. \cite{Xin19})
\begin{align*}
    R_{ijkl}=&\left\langle B_{e_ie_k},B_{e_je_l}\right\rangle-\left\langle B_{e_ie_l},B_{e_je_k}\right\rangle,\\
    \left(\nabla_{e_i}B\right)_{e_je_k}=&\left(\nabla_{e_j}B\right)_{e_ie_k},\\
    R\left(e_i,e_j,\nu,\mu\right)=&\left\langle A^{\nu}\left(e_i\right),A^{\mu}\left(e_j\right)\right\rangle-\left\langle A^{\nu}\left(e_j\right),A^{\mu}\left(e_i\right)\right\rangle.
\end{align*}

\vskip12pt

All spacelike $m$-planes (oriented $m$-subspaces) in $\mathbb{R}^{m+n}_n$ form the pseudo-Grassmannian manifold $G^{n}_{m, n}$. It is a specific Cartan-Hadamard manifold which is the noncompact dual space of the Grassmannian manifold $G_{m, n}$.

Let $P_1, P_2 \in G^{n}_{m, n}$ be two spacelike $m$-planes in $\mathbb{R}^{m+n}_n$. The angles between $P_1$ and $P_2$ are defined by the critical values of angel $\theta$ between a nonzero vector $x$ in $P_1$ and its orthogonal projection $x^*$ in $P_2$ as $x$ runs through $P_1$.

Assume that $e_1, ..., e_m$ are oriented  orthonormal vectors which span $P_1$ and $a_1, ..., a_m$ for $P_2$. For a nonzero vector in $P_1$, 
\[
x = \sum_{i} x_i e_i,
\]
its orthonormal projection in $P_2$ is 
\[
x^* = \sum_i x_{i}^* a_i.
\]
Hence for any $y \in P_2$, we obtain 
\[
\la x-x^*, y \ra = 0.
\]
Let $W_{ij} = \la e_i, a_j \ra$. Then we get 
\[
x_j^{*} = \sum_i W_{ij}x_i.
\]
A direct computation yields
\begin{align*}
    \left\langle x, x^*\right\rangle=|x^*|^2=\sum_{i,j,k}x_iW_{ij}W_{kj}x_k.
\end{align*}
Since $WW^T$ is symmetric, so we can choose appropriate orthonormal vectors $\{e_1, ..., e_m\}$, such that  $WW^T = \mathrm{diag}\{ \mu_{1}^2, ..., \mu_{m}^2 \}$ with $\mu_i = \cosh \theta_i\geq1$. Hence
\begin{align*}
    \left\langle x, x^*\right\rangle\geq|x||x^*|.
\end{align*}
The angle $\theta$ between $x$ and $x^*$ is defined by 
\[
\cosh \theta = \frac{\la x, x^* \ra}{|x||x^*|}.
\]

For the spacelike $m$-submanifold of $\mathbb{R}^{m+n}_n$, let $\{e_i\}$ be a local orthonormal frame of $TM^m$ such that $e_1\wedge e_2\wedge\dotsm\wedge e_m$ gives the orientation of $M^m$.
For the fixed $P_2 \in G^n_{m, n}$, which is spanned by the oriented orthonormal basis $a_1,\dotsc,a_m$, define the $w$-function as follows
\[
w=\left\langle e_1\wedge e_2\wedge\dotsm\wedge e_m,a_1\wedge a_2\wedge \dotsm\wedge a_m\right\rangle = \det W.
\]
Then, up to multiplying by -1, the $w$-function given by the spacelike $m$-plane $P$ satisfies $w\geq1$ when restricted on $M^m$.   
Now we have
\[
w= \prod_i \cosh \theta_i = \prod_i \frac{1}{\sqrt{1-\l_i^{2}}}, \quad \l_i = \tanh \theta_i.
\]
Choose timelike vectors $a_{m+\a}$ such that $\{ a_i, a_{m+\a}| i=1,...,m; \a = 1,...,n \}$ is an orientated orthonormal Lorentzian basis of $\mathbb{R}^{m+n}_n$. Then we can choose appropriate $\{ a_i, a_{m+\a}| i=1,...,m; \a = 1,...,n \}$ such that
\begin{align*}
   \{ e_i = \cosh \theta_i a_i + \sinh \theta_i a_{m+i}| i=1,\dotsc,m\}
\end{align*}
is an orientated tangent orthonormal basis of $M^m$, here $\theta_i = 0$ for $i>\min\{m, n\}$. 

\section{Volume growth estimate}

We derive the following volume growth estimate for the spacelike entire graphs in pseudo-Euclidean space $\mathbb{R}^{m+n}_n$. 


\begin{thm}\label{thm-volume}
Let $X: M^m \longrightarrow \mathbb{R}^{m+n}_{n}$ be an $m$-dimensional  spacelike entire graph. Let $z=\la X, X \ra$.
Assume the origin $o\in M^m$, then 
\begin{align}\label{eq:volume}
    \limsup_{R\to\infty}R^{-2m}\int_{\{z\leq R^2\}\cap M^m}w<\infty.
\end{align}
Consequently, for every $\alpha>0$, 
\begin{align*}
    \int_{M^m}we^{-\alpha z}<\infty.
\end{align*}
\end{thm}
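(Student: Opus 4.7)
The plan is to reduce the integral $\int_{\{z\leq R^2\}\cap M}w\,dV_M$ to a Euclidean volume in $\mathbb{R}^m$. Taking the reference $m$-plane in the definition of $w$ to be the coordinate plane $P=\mathbb{R}^m\times\{0\}$, the entire graph assumption lets us write $X(x)=(x,f(x))$ for a smooth map $f:\mathbb{R}^m\to\mathbb{R}^n$ with $f(0)=0$ (since the origin lies on $M$). The induced Riemannian metric is $g=I-Df^{T}Df$, whose eigenvalues, in terms of the angles $\theta_i$ from Section 2, are $1/\cosh^2\theta_i$; combined with $w=\prod_i\cosh\theta_i$ this gives $w\,dV_M=dx$. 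Consequently
\[
\int_{\{z\leq R^2\}\cap M}w\,dV_M = |A_R|, \qquad A_R:=\{x\in\mathbb{R}^m:|x|^2-|f(x)|^2\leq R^2\},
\]
so the problem reduces to proving $|A_R|\leq CR^{2m}$ for large $R$.

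The main step is a uniform radial lower bound for $z$ along rays. The spacelike condition $Df^{T}Df<I$ gives the pointwise strict inequality $|Df(x)v|<|v|$ for $v\neq 0$; integrating along the segment from $0$ to $v\in S^{m-1}$ yields $|f(v)|<1$. By continuity and compactness of $S^{m-1}$ there exists $\varepsilon_0>0$ with $1-|f(v)|\geq\varepsilon_0$ for every $v\in S^{m-1}$. Integrating $|Df(\cdot)v|\leq 1$ from $v$ to $tv$ for $t\geq 1$ gives $|f(tv)|\leq|f(v)|+(t-1)$, so $t-|f(tv)|\geq\varepsilon_0$, and therefore
\[
z(tv)=\bigl(t-|f(tv)|\bigr)\bigl(t+|f(tv)|\bigr)\geq\varepsilon_0\,t \qquad (t\geq 1,\ v\in S^{m-1}).
\]
Equivalently, $z(x)\geq\varepsilon_0|x|$ whenever $|x|\geq 1$; hence $A_R$ is contained in the Euclidean ball of radius $R^2/\varepsilon_0$ for $R\geq 1$, proving $|A_R|\leq CR^{2m}$ and so \eqref{eq:volume}.

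The consequence $\int_M we^{-\alpha z}<\infty$ then follows from a standard layer-cake computation: writing $U(s):=\int_{\{z\leq s\}\cap M}w\,dV_M$, the first part yields $U(s)\leq C(1+s^m)$, and
\[
\int_M w\,e^{-\alpha z}\,dV_M = \alpha\int_0^\infty U(s)\,e^{-\alpha s}\,ds<\infty
\]
for every $\alpha>0$. The delicate point of the argument is the uniformity step: the spacelike condition only provides the pointwise strict inequality $|Df|_{\mathrm{op}}<1$, and the upgrade to a uniform lower bound $1-|f(v)|\geq\varepsilon_0$ on the unit sphere relies essentially on compactness of $S^{m-1}$ together with the hypothesis $f(0)=0$.
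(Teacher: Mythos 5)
Your proof is correct and follows essentially the same route as the paper's: both reduce the integral via $w\,dV_M = dx$ to the Euclidean volume of $\{x: |x|^2-|u(x)|^2\leq R^2\}$, establish a uniform gap $1-|u|\geq\varepsilon_0$ on a fixed sphere by compactness and the spacelike (1-Lipschitz) condition, and propagate it radially to obtain $z(x)\gtrsim|x|$ for $|x|$ large; the second conclusion then follows by a layer-cake/Fubini computation equivalent to the paper's Co-Area step. Your ray-integration phrasing of the Lipschitz estimate and the direct Fubini argument are marginally cleaner but not substantively different.
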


\begin{proof}

Since $M^m$ is an entire graph, namely, $M^m$ can be written as $ \{ \left. X=(x, u(x)) \right| x \in \mathbb{R}^m, u=(u^1, u^2, ..., u^n)  \}$. 
By using the  singular value decomposition (see \cite{Sal08}), by an action of $\mathrm{SO}(m)\times\mathrm{SO}(n)$ we can choose a new Lorentzian coordinates $\{x_1, \dotsc, x_m, x_{m+1},\dotsc, x_{m+n}\}$ on $\mathbb{R}^{m+n}_n$ such that at a considered point to be calculated,
\[
du\left(\frac{\p}{\p x_i}\right) = \l_i \frac{\p}{\p x_{m+i}}.
\]
Here $\l_i=0 $ for $i>\min\{m, n\}$. For simplicity, we denote $E_i=\frac{\p}{\p x_i}, E_{m+\a}=\frac{\p}{\p x_{m+\a}}$, ($i=1,...,m,$  and $\a= 1,..., n$). Since $M^m$ is spacelike, we have $|\l_i| < 1$. Let
\begin{equation*}
e_i:=\dfrac{1}{\sqrt{1-\l_i^{2}}}(E_i + \l_i E_{m+i}).
\end{equation*}
Define the $w$-function as 
\[
w=\la e_1\wedge ... \wedge e_m,  E_1\wedge... \wedge E_m\ra.
\]
Then we derive
\begin{equation}\label{eqn-w-function}
w= \prod_i \dfrac{1}{\sqrt{1-\l_i^{2}}}=\dfrac{1}{\sqrt{\det(g_{ij})}},
\end{equation}
where $g_{ij} =\d_{ij}-\sum_{\alpha} u^\a_{i}u^{\a}_j$ is the induced metric on $M^m$.
Moreover, $z=|x|_{\mathbb{R}^m}^2-|u(x)|_{\mathbb{R}^n}^2$. Since $M$ is spacelike and $u(0)=0$ we have $|u(x)|<|x|$ (for $|x|\neq 0$), thus there exists a constant $\d>0$, such that for each point $(x, u(x)) \in M$ with $|x|=\e$, we get $|y|\leq \d <\e$. Without loss of generality, assume $x\neq 0$, let $\bar{x}=\frac{\e}{|x|}x$ and $(\bar{x}, \bar{u}(\bar{x}))\in M$, then $|\bar{x}|=\e$ and $|\bar{u}(\bar{x})|\leq \d$. Again since $M$ is spacelike, we obtain 
\[
|u(x)-\bar{u}(\bar{x})|  \leq |x-\bar{x}|.
\]
It follows that 
\begin{equation*}\aligned
|u(x)|^2 \leq & |x|^2 + |\bar{x}|^2 -|\bar{u}(\bar{x})|^2 + 2\la u(x), \bar{u}(\bar{x}) \ra -2\la x, \bar{x} \ra \\
\leq & |x|^2 +\e^2 -|\bar{u}(\bar{x})|^2 +2|u(x)||\bar{u}(\bar{x})|- 2\e |x| \\
\leq & |x|^2 - 2(\e-\d)|x| +\e^2.
\endaligned
\end{equation*}
This implies that 
\[
|u(x)| \leq |x|+\e
\]
and 
\begin{equation}\label{eqn-pseudo}
|x| \leq \frac{z+\e^2}{2(\e-\d)} =: C_1 (z+1),
\end{equation}
where $C_1$ is a positive constant depending only on $\e$ and $\d$. Direct computation gives us
\begin{equation*}\aligned
|X^T|^2 = &\la X, e_i \ra^2 =\sum_i \frac{1}{1-\l_i^{2}}\left( \la X, E_i \ra + \l_i \la X, E_{m+i} \ra \right)^2 \\
\leq & \sum_i \frac{2}{1-\l_i^{2}} \left( \la X, E_i \ra^2 + \la X, E_{m+i} \ra^2 \right)\\
\leq &  2 w^2 (|x|^2 + |u(x)|^2)
\endaligned
\end{equation*}
 Therefore we obtain
\begin{equation}\label{eqn-position}
|X^T| \leq C_2 w (z+1)
\end{equation}
here $C_2$ is a positive constant depending only on $\e$ and $\d$.

Since $M$ is entire, by (\ref{eqn-pseudo}), $z$ is proper. Hence, for every $R>0$,
\begin{align*}
    \int_{\{z\leq R^2\}\cap M^m}w=\int_{\{x\in\mathbb{R}^m:|x|_{\mathbb{R}^m}^2-|u(x)|_{\mathbb{R}^n}^2\leq R^2\}}w\sqrt{\det g}dx\leq\int_{\{|x|_{\mathbb{R}^m}\leq C_1\left(R^2+1\right)\}}dx=C_1^m\left(R^2+1\right)^{m}\int_{\{|x|_{\mathbb{R}^m}\leq1\}}dx,
\end{align*}
which gives the desired estimate \eqref{eq:volume}. 

Since $|\nabla\sqrt{z}|=\frac{|X^T|}{\sqrt{z}}\geq1$ whenever $X\neq o$, by  the Co-Area formula and integration by parts, we obtain
\begin{align*}
  \int_{M^m}we^{-\alpha z}=&\int_0^{\infty}\left(\int_{\{\sqrt{z}= R\}\cap M^m}\dfrac{1}{|\nabla\sqrt{z}|}we^{-\alpha z} \right)dR \\
  =&\int_0^{\infty}e^{-\alpha R^2}d\int_{\{\sqrt{z}\leq R\}\cap M^m}w\\
  =& 2\alpha\int_{0}^{\infty}Re^{-\alpha R^2}\left(\int_{\{\sqrt{z}\leq R\}\cap M^m}w\right)d R\\
  \leq& 2 \alpha\int_{0}^{1}Re^{-\alpha R^2}\left(\int_{\{\sqrt{z}\leq R\}\cap M^m}w\right)d R+C\int_{1}^{\infty}Re^{-\alpha R^2}R^{2m}d R\\
  <&\infty.
\end{align*}
\end{proof}

\section{Proof of Theorem \ref{thm-C1} and Theorem \ref{thm-RW}}

Let $V:= -\frac{1}{2}X^T $ and $ \D_V:= \D +\la V, \n\cdot \ra$ be the drift-Laplacian.

\begin{proof}[Proof of Theorem \ref{thm-C1}]
Let $\{e_1, ..., e_m\}$ be a local tangent orthonormal frame field on $M^m$ such that $\n_{e_i}e_j = 0$ at a  considered point to be calculated. From the self-shrinker equation \eqref{Def}, we obtain
\beq\label{eqn-H1}
\n_{e_j} H = -\dfrac{1}{2} \n_{e_j} \left(X- \la X, e_k \ra e_k\right)^N = \dfrac{1}{2} \la X,  e_k \ra B_{jk}
\eeq
and 
\[
\n_{e_i}\n_{e_j} H = \dfrac{1}{2} B_{ij} - \la H, B_{ik} \ra B_{jk} + \dfrac{1}{2} \la X, e_k \ra \n_{e_i} B_{jk}.
\]
Then using the Codazzi equation, we derive
\begin{align*}
\D_V |H|^2 = & \D |H|^2 + \left\la V, \n |H|^2 \right\ra  \\
=& 2\la \n_{e_i}\n_{e_i} H, H \ra + 2|\n H|^2 + \left\la V, \n |H|^2 \right\ra \\
=& |H|^2 - 2 \la H, B_{ik} \ra^2 + \frac{1}{2}\n_{X^T} |H|^2 + 2|\n H|^2 - \dfrac{1}{2}\left\la X^T, \n |H|^2  \right\ra \\
=&   |H|^2 - 2 |A^H|^2 + 2|\n H|^2.
\end{align*}
 It follows that
\begin{equation}\label{eqn-H2}\aligned
\D_V \|H\|^2 =   \|H\|^2 + 2 |A^H|^2 + 2\|\n H\|^2 \geq  2 \left|A^H\right|^2,
\endaligned
\end{equation}
here $\|H\|^2$ is the absolute value of the square of the mean curvature vector $H$. 

By \eqref{eqn-H1}, we get
\begin{equation*}\aligned
\n_{X^T} \|H\|^2 = &  -2 \left\la \n_{X^T} H, H \right\ra \\
=&-2\left\la X, e_j \right\ra \left\la \n_{e_j} H, H \right\ra \\
= & -2\la X, e_j \ra \left\la \frac{1}{2}\la X, e_k \ra B_{jk}, H \right\ra \\
=& - \left\la B(X^T, X^T), H \right\ra \\
=& - \left\la A^H(X^T),  X^T \right\ra.
\endaligned
\end{equation*}
Note that $X=X^T + X^N$, therefore we have 
\[
z= \la X, X \ra =\left |X^T\right|^2 + \left|X^N\right|^2 = \left|X^T\right|^2 -\left \|X^N\right\|^2,
\]
where $\left \|X^N\right\|^2$ is the absolute value of the square of the timelike vector $X^N$. Then by the self-shrinker equation \eqref{Def}, we obtain
\[
\left|X^T\right|^2 = z + 4\|H\|^2.
\]
Denote $B_R := \left\{ p\in \mathbb{R}^{m+n}_n: z(p)\leq R^2 \right\}$ and $D_R:= M^m\cap B_R$.  By (\ref{eqn-pseudo}), $z$ is proper, this implies that $D_R$ is compact in $M^m$. Thus a direct computation yields 
\begin{align*}
\int_{D_R}\D_V \|H\|^2 e^{-\frac{z}{4}} =  & \int_{D_R} e^{\frac{z}{4}}\div \left( e^{-\frac{z}{4}} \n \|H\|^2 \right) e^{-\frac{z}{4}} \\
=& \int_{D_R} \div \left( e^{-\frac{z}{4}}\n \|H\|^2 \right) \\
=& \int_{\p D_R} \left\la e^{-\frac{z}{4}}\n \|H\|^2, \frac{X^T}{|X^T|} \right\ra \\
=& \int_{\p D_R} \frac{1}{|X^T|} \n_{X^T} \|H\|^2 e^{-\frac{z}{4}}\\
 = & - \int_{\p D_R} \frac{\left\la A^H(X^T), X^T \right\ra}{\left|X^T\right|} e^{-\frac{z}{4}} \\
 \leq& \int_{\p D_R} \left|A^H\right| \left|X^T\right| e^{-\frac{z}{4}} \\
 \leq & \left( R \int_{\p D_R} \frac{\left|A^H\right|^2}{\left|X^T\right|} e^{-\frac{z}{4}}\right)^{\frac{1}{2}} \left( R^{-1} \int_{\p D_R} \frac{\left(z+4\|H\|^2\right)^2}{\left|X^T\right|} e^{-\frac{z}{4}} \right)^{\frac{1}{2}}.
\end{align*}
Namely
\begin{align}
    \label{eqn-H3}
\int_{D_R}\D_V \|H\|^2 e^{-\frac{z}{4}}\leq \left( R \int_{\p D_R} \frac{\left|A^H\right|^2}{\left|X^T\right|} e^{-\frac{z}{4}}\right)^{\frac{1}{2}} \left( R^{-1} \int_{\p D_R} \frac{\left(z+4\|H\|^2\right)^2}{\left|X^T\right|} e^{-\frac{z}{4}} \right)^{\frac{1}{2}}.
\end{align}
The Cauchy inequality implies
\begin{align}\label{eqn-A1}
   \left|A^H\right|^2 = \sum_{i, j} \left\la B_{ij}, H \right\ra^2 \geq \sum_i \left\la B_{ii}, H \right\ra^2 \geq \dfrac{\left( \sum_i \left\la B_{ii}, H \right\ra \right)^2}{m}  = \frac{\|H\|^4}{m}. 
\end{align}

By the assumption that $M^m$ is not a linear subspace, we can conclude that $M^m$ is not maximal, i.e.,  $H\not\equiv 0$. Otherwise, by the proof of Theorem 4.2 in \cite{JX}, we derive that $M$ is a linear subspace, this yields the contradiction.  Then there exists $R_0 > 0$, such that  for any $R > R_0$, 
\[
\int_{D_R} \left|A^H\right|^2 e^{-\frac{z}{4}} \geq \frac{1}{m} \int_{D_R} \|H\|^4 e^{-\frac{z}{4}} > 0.
\]

Let 
\[
F(R) := \int_{D_R} \left|A^H\right|^2 e^{-\frac{z}{4}}, \quad G(R):= \int_{D_R} \left(z+4\|H\|^2\right)^2 e^{-\frac{z}{4}}.
\]
By the Co-Area formula, we have
\begin{equation*}\aligned
F(R) = & \int_{0}^R \int_{\p D_R} \frac{\left|A^H\right|^2 e^{-\frac{z}{4}}}{|\n \sqrt{z}|} = \int_{0}^R \left( r e^{-\frac{r^2}{4}} \int_{\p D_r} \frac{\left|A^H\right|^2}{\left|X^T\right|} \right)dr, \\
G(R) = & \int_{0}^R \int_{\p D_r}\frac{\left(z+4\|H\|^2\right)^2 e^{-\frac{z}{4}}}{|\n \sqrt{z}|} = \int_{0}^R   \left( r e^{-\frac{r^2}{4}} \int_{\p D_r} \frac{\left(z+4\|H\|^2\right)^2 }{\left|X^T\right|} \right)dr.
\endaligned
\end{equation*}
It follows that
\begin{equation*}\aligned
F'(R) =   R e^{-\frac{R^2}{4}} \int_{\p D_R} \dfrac{\left|A^H\right|^2}{\left|X^T\right|}, \quad
G'(R) =  R e^{-\frac{R^2}{4}} \int_{\p D_R} \dfrac{\left(z+4\|H\|^2\right)^2 }{\left|X^T\right|}.
\endaligned
\end{equation*}
From \eqref{eqn-H2} and \eqref{eqn-H3}, we obtain 
\begin{equation*}\aligned
4F(R)^2 \leq F'(R) \cdot R^{-2} G'(R).
\endaligned
\end{equation*}
Namely, 
\[
\dfrac{R^2}{G'(R)} \leq \dfrac{F'(R)}{4F(R)^2} = -\dfrac{1}{4} \left( \dfrac{1}{F(R)} \right)',  \quad \forall R > R_0. 
\]
Therefore for any fixed $r$ satisfying $R> r > R_0$, 
\begin{align*}
\dfrac{1}{4} (R^2 -r^2)^2 = & \left( \int_r^{R} sds \right)^2\leq \int_{r}^R \frac{s^2}{G'(s)} ds \cdot \int_r^{R} G'(s)ds \leq -\dfrac{1}{4} \left( \dfrac{1}{F(R)} - \dfrac{1}{F(r)} \right) \cdot \left( G(R) - G(r) \right),
\end{align*}
which gives
\begin{align}\label{eq:refine0}
  (R^2 -r^2)^2\leq& \dfrac{G(R)}{F(r)}  
\end{align}

We claim that
\begin{align}\label{eqn-AH}
\int_{M^m}\left|A^H\right|^2e^{-\frac{z}{4}} = \infty. 
\end{align}
In fact, let $R$ go to infinity and then $r$ go to infinity,
\begin{equation*}\aligned
\limsup_{R\to \infty} \dfrac{R^4}{G(R)} \leq \liminf_{r\to\infty}\dfrac{1}{F(r)}=\dfrac{1}{\int_{M^m}|A^{H}|^2e^{-\frac{z}{4}}}\leq \dfrac{m}{\int_{M^m}\|H\|^4e^{-\frac{z}{4}}}.
\endaligned
\end{equation*}
This implies that
\begin{equation}\label{eqn-A2}\aligned
\limsup_{R\to \infty} \dfrac{R^4}{ \int_{D_R } 2z^2 e^{-\frac{z}{4}} + \int_{D_R} 32\|H\|^4 e^{-\frac{z}{4}}} \leq \limsup_{R\to \infty} \dfrac{R^4}{G(R)} \leq \dfrac{m}{\int_{M^m}\|H\|^4e^{-\frac{z}{4}}}.
\endaligned
\end{equation}
By Theorem \ref{thm-volume}, $\int_{M^m} e^{\a z} < \infty $ for any $\a < 0$,  so we can conclude that 
\begin{align}\label{eqn-zweight}
    \int_{D_R } z^2 e^{-\frac{z}{4}} \leq  \int_{M^m } z^2 e^{-\frac{z}{4}} < \infty.
\end{align}
Thus from the inequality \eqref{eqn-A2}, we get
\begin{equation}\label{eqn-H4}\aligned
\limsup_{R\to \infty} \frac{R^4}{  \int_{D_R} 32\|H\|^4 e^{-\frac{z}{4}}} \leq \dfrac{m}{\int_{M^m}\|H\|^4e^{-\frac{z}{4}}}.
\endaligned
\end{equation}
If  $\int_{M^m}\|H\|^4e^{-\frac{z}{4}} < \infty$, then by \eqref{eqn-H4}, we conclude that
\begin{align*}
    \infty > \dfrac{m}{\int_{M^m}\|H\|^4e^{-\frac{z}{4}}}\geq\limsup_{R\to \infty} \dfrac{R^4}{  \int_{D_R} 32\|H\|^4 e^{-\frac{z}{4}}}\geq\limsup_{R\to \infty} \dfrac{R^4}{  \int_{M^m} 32\|H\|^4 e^{-\frac{z}{4}}}=\infty.
\end{align*}
This yields the contradiction. Hence 
\begin{align}\label{eqn-AH0}
\int_{M^m}\|H\|^4e^{-\frac{z}{4}} = \infty. 
\end{align}
Then \eqref{eqn-AH} follows from \eqref{eqn-A1}.

 From \eqref{eqn-H2}, \eqref{eqn-H3} and \eqref{eqn-A1}, we get
 \begin{align}
     2\int_{D_R}\left|A^{H}\right|^2e^{-\frac{z}{4}}\leq & \int_{D_R} \D_V\|H\|^2 e^{-\frac{z}{4}} \notag\\
     \leq &  \left( R \int_{\p D_R} \dfrac{\left|A^H\right|^2}{\left|X^T\right|} e^{-\frac{z}{4}}\right)^{\frac{1}{2}} \left( R^{-1} \int_{\p D_R} \dfrac{\left(z+4\sqrt{m}\left|A^H\right|\right)^2}{\left|X^T\right|} e^{-\frac{z}{4}} \right)^{\frac{1}{2}} \notag\\
     \leq&4\sqrt{m}\int_{\p D_R} \dfrac{\left(\frac{z}{4\sqrt{m}}+\left|A^H\right|\right)^2}{\left|X^T\right|} e^{-\frac{z}{4}}. \label{eqn-AH2}
 \end{align}
 Set
 \begin{align*}
     \hat F(R)=\int_{D_R}\left(\left|A^H\right|+\dfrac{z}{4\sqrt{m}}\right)^2e^{-\frac{z}{4}}.
 \end{align*}
$\forall \e \in \left(0, \frac12\right)$, for given positive constant $\d < \frac{\e}{1-\e}$,
\begin{equation}\aligned\label{eqn-Fhat1}
\hat{F}(R) \leq &  (1+\d)\int_{D_R}\left|A^H\right|^2 e^{-\frac{z}{4}} + \left( 1+ \dfrac{1}{\d} \right) \cdot \dfrac{1}{16m} \int_{D_R} z^2 e^{-\frac{z}{4}} \\
\leq & (1+\d)\int_{D_R}\left|A^H\right|^2 e^{-\frac{z}{4}} + \left( 1+ \dfrac{1}{\d} \right) \cdot \dfrac{1}{16m} \int_{M^m} z^2 e^{-\frac{z}{4}}. 
\endaligned
\end{equation}
Since $\d< \frac{\e}{1-\e}$, we get $\frac{1}{1-\e}-(1+\d)>0$. By \eqref{eqn-AH}, we obtain
\[
\lim_{R\to \infty } \int_{D_R} \left|A^H\right|^2 e^{-\frac{z}{4}} = \infty. 
\]
Note that $\int_{M^m} z^2 e^{-\frac{z}{4}}< \infty$ by \eqref{eqn-zweight}. Thus there exists $R_1>0$, such that when $R>R_1$, we have
\begin{align}\label{eqn-Fhat2}
\int_{D_R} \left|A^H\right|^2 e^{-\frac{z}{4}} > \frac{1}{\frac{1}{1-\e}-(1+\d)}\left( 1+ \dfrac{1}{\d} \right) \cdot \dfrac{1}{16m} \int_{M^m} z^2 e^{-\frac{z}{4}}.
\end{align}
Combining \eqref{eqn-Fhat1} with \eqref{eqn-Fhat2}, it follows 
\begin{equation}\label{eqn-Fhat3}
\hat{F}(R) < \dfrac{1}{1-\e} \int_{D_R}\left|A^H\right|^2 e^{-\frac{z}{4}}.
\end{equation}
By the Co-Area formula, we obtain
\begin{equation*}\aligned
\hat{F}(R) = \int_{D_R}\left(\left|A^H\right|+\dfrac{z}{4\sqrt{m}}\right)^2e^{-\frac{z}{4}} = \int_{0}^R \left(r \int_{\p D_r} \dfrac{ \left( \left|A^H\right|+\frac{z}{4\sqrt{m}} \right)^2}{\left|X^T\right|}e^{-\frac{z}{4}}\right) dr. 
\endaligned
\end{equation*}
This implies that 
\begin{equation}\label{eqn-Fhat4}
\hat{F}'(R) = R \int_{\p D_R} \dfrac{ \left( \left|A^H\right|+\frac{z}{4\sqrt{m}} \right)^2}{\left|X^T\right|}e^{-\frac{z}{4}}.
\end{equation}
 Thus from \eqref{eqn-AH2}, \eqref{eqn-Fhat3} and \eqref{eqn-Fhat4}, we get
 \begin{align*}
     0<\dfrac{1-\varepsilon}{2\sqrt{m}}\hat F(R)\leq\dfrac{1}{R}\hat F'(R),
 \end{align*}
which implies for some $R_2>R_1$,
 \begin{align*}
     \dfrac{1-\varepsilon}{4\sqrt{m}}\left(R^2-R_2^2\right)\leq\log\dfrac{\hat F(R)}{\hat F(R_2)},\quad\forall R>R_2.
 \end{align*}
 Namely,
 \[
  \int_{D_R}\left|A^H\right|^2e^{-\frac{z}{4}} > (1-\e)\dfrac{\hat{F}(R_2)}{\exp\left( \frac{1-\e}{4\sqrt{m}}R_{2}^2 \right)}\cdot \exp \left( \dfrac{1-\e}{4\sqrt{m}}R^2 \right)
 \]
Thus for $R$ sufficiently large, we have
 \begin{align}\label{eqn-AH3}
     \int_{D_R}\left|A^H\right|^2e^{-\frac{z}{4}}\geq\exp\left(\dfrac{1-2\varepsilon}{4\sqrt{m}}R^2\right).
 \end{align}
 According to \eqref{eq:refine0} and \eqref{eqn-AH0}, as the similar reason to derive \eqref{eqn-Fhat3},  for sufficiently large $R$, we obtain
 \begin{align*}
\left(R^2-r^2\right)^2 \leq\dfrac{\int_{D_R}\left(z+4\|H\|^2\right)^2e^{-\frac{z}{4}}}{\int_{D_r} \left|A^H\right|^2e^{-\frac{z}{4}}} 
\leq  \dfrac{16(1+\varepsilon)\int_{D_R}\|H\|^4e^{-\frac{z}{4}}+ C_{\varepsilon}}{\int_{D_r} \left|A^H\right|^2e^{-\frac{z}{4}}} 
\leq  \dfrac{16(1+2\varepsilon)\int_{D_R}\|H\|^4e^{-\frac{z}{4}}}{\int_{D_r} \left|A^H\right|^2e^{-\frac{z}{4}}}. 
\end{align*}
 Choosing $r = (1-\e)R$, by \eqref{eqn-AH3},
  \begin{equation*}\aligned
\left(2\e-\e^2\right)R^4 \leq  \dfrac{16(1+2\varepsilon)\int_{D_R}\|H\|^4e^{-\frac{z}{4}}}{\int_{D_{(1-\e)R}} \left|A^H\right|^2e^{-\frac{z}{4}}} \leq  \dfrac{16\left(1+2\varepsilon\right)\int_{D_R}\|H\|^4e^{-\frac{z}{4}}}{\exp\left(\dfrac{1-2\varepsilon}{4\sqrt{m}}(1-\e)^2R^2\right)}.
\endaligned
\end{equation*}
 Direct computation gives us 
  \begin{equation*}\aligned
\frac{1}{R^2}\left( \log(2\e-\e^2) + 4\log R \right) + \frac{(1-2\e)(1-\e)^2}{4\sqrt{m}}\leq \frac{1}{R^2}\log 16(1+2\varepsilon) + \frac{1}{R^2}\log \int_{D_R} \|H\|^4 e^{-\frac{z}{4}}.
\endaligned
\end{equation*}
Letting $R \to \infty$ in the above equality, we get
 \begin{align*}
     \limsup_{R\to\infty}\dfrac{R^2}{\log\left(\int_{D_R}\|H\|^4e^{-\frac{z}{4}}\right)}\leq\dfrac{4\sqrt{m}}{\left(1-2\varepsilon\right)\left(1-\varepsilon \right)^2}.
 \end{align*}
 Let $\varepsilon$ go to zero, we conclude that
 \begin{align*}
     \limsup_{R\to\infty}\dfrac{R^2}{\log\left(\int_{D_R}\|H\|^4e^{-\frac{z}{4}}\right)}\leq4\sqrt{m}.
 \end{align*}
\end{proof}

 Following the idea of the proof of Theorem \ref{thm-C1}, we give the

\begin{proof}[Proof of Theorem \ref{thm-RW}]
Let $B_R := \left\{ p\in \mathbb{R}^{m+n}_n: z(p)\leq R^2 \right\}$ and $D_R:= M^m\cap B_R$. By (\ref{eqn-pseudo}), $z$ is proper, thus $D_R$ is compact in $M^m$.
 Integration by parts gives us
\begin{equation}\label{eqn-H9}\aligned
\int_{D_R} \D_V(\log w) \log w e^{-\frac{z}{4}} =& \int_{D_R} \div\left( e^{-\frac{z}{4}}\n \log w \right)\log w \\
 =& \int_{D_R} \div \left( e^{-\frac{z}{4}}(\n \log w) \log w  \right) - \int_{D_R} \la e^{-\frac{z}{4}}\n\log w, \n \log w \ra \\
 =& \int_{\p D_R} \left\la e^{-\frac{z}{4}}(\n\log w) \log w, \frac{X^T}{|X^T|} \right\ra - \int_{D_R} |\n\log w|^2 e^{-\frac{z}{4}}.
\endaligned
\end{equation}
By Proposition 3.1 in \cite{LX}, we get
\begin{equation}\label{eqn-H8}
\D_V (\log w) \geq \frac{\|B\|^2}{w^2},
\end{equation}
here $\|B\|^2$ is the absolute value of the square of the second fundamental form.

 From \eqref{eqn-H9} and \eqref{eqn-H8}, we have
\begin{equation}\label{eqn-H5}\aligned
\int_{D_R} \frac{\|B\|^2}{w^2} \log w e^{-\frac{z}{4}} + \int_{D_R} |\n\log w|^2 e^{-\frac{z}{4}} \leq \int_{\p D_R} \left\la e^{-\frac{z}{4}}(\n\log w) \log w, \frac{X^T}{|X^T|}\right \ra.
\endaligned
\end{equation}

 Applying the  Cauchy-Schwarz inequality to the right hand side of \eqref{eqn-H5}, and using \eqref{eqn-position}, we get
\begin{equation}\label{eqn-H6}\aligned
\int_{\p D_R} \left\la e^{-\frac{z}{4}}(\n\log w) \log w, \frac{X^T}{|X^T|} \right\ra
\leq& \int_{\p D_R} |\n \log w| \log w e^{-\frac{z}{4}} \\
\leq & \left( \int_{\p D_R} \frac{R|\n \log w|^2}{|X^T|} e^{-\frac{z}{4}} \right)^{\frac{1}{2}}\left( \int_{\p D_R} \frac{R^{-1}(\log w)^2|X^T|^2}{|X^T|} e^{-\frac{z}{4}} \right)^{\frac{1}{2}} \\
\leq & \left( \int_{\p D_R} \frac{R|\n \log w|^2}{|X^T|} e^{-\frac{z}{4}} \right)^{\frac{1}{2}}\left( \int_{\p D_R} \frac{R^{-1}(\log w)^2 2C_2^{2} w^2(z+1)^2}{|X^T|} e^{-\frac{z}{4}} \right)^{\frac{1}{2}}  \\
\leq & C_3  \left( \int_{\p D_R} \frac{R|\n \log w|^2}{|X^T|} e^{-\frac{z}{4}} \right)^{\frac{1}{2}}\left( \int_{\p D_R} \frac{R^{3}w^2(\log w)^2}{|X^T|} e^{-\frac{z}{4}} \right)^{\frac{1}{2}}. 
\endaligned
\end{equation}
Combining \eqref{eqn-H5} with \eqref{eqn-H6}, we have
\begin{equation}\label{eqn-H7}\aligned
 \int_{D_R} |\n\log w|^2 e^{-\frac{z}{4}} \leq C_3  \left( \int_{\p D_R} \frac{R|\n \log w|^2}{|X^T|} e^{-\frac{z}{4}} \right)^{\frac{1}{2}}\left( \int_{\p D_R} \frac{R^{3}w^2(\log w)^2}{|X^T|} e^{-\frac{z}{4}} \right)^{\frac{1}{2}}. 
\endaligned
\end{equation}

Let 
\[
\widetilde{F}(R) := \int_{D_R} |\n\log w|^2 e^{-\frac{z}{4}}, \quad \widetilde{G}(R):= \int_{D_R} w^2 (\log w)^2 e^{-\frac{z}{4}}.
\]
The Co-Area formula gives 
\begin{equation*}\aligned
\widetilde{F}(R) =&  \int_{0}^R \int_{\p D_r}\frac{|\n \log w|^2 e^{-\frac{z}{4}}}{|\n \sqrt{z}|} = \int_{0}^R \int_{\p D_r} \frac{r|\n \log w|^2 e^{-\frac{z}{4}}}{|X^T|} dr, \\
\widetilde{G}(R) =& \int_{0}^R \int_{\p D_r} \frac{w^2 (\log w)^2 e^{-\frac{z}{4}}}{|\n \sqrt{z}|} = \int_{0}^R \int_{\p D_r} \frac{rw^2(\log w)^2 e^{-\frac{z}{4}}}{|X^T|} dr.
\endaligned
\end{equation*}
Then we get
\begin{equation*}\aligned
\widetilde{F}'(R) =  Re^{-\frac{R^2}{4}}\int_{\p D_R} \frac{|\n \log w|^2}{|X^T|}, \quad 
\widetilde{G}'(R) = Re^{-\frac{R^2}{4}}\int_{\p D_R} \frac{w^2 ( \log w)^2}{|X^T|}.
\endaligned
\end{equation*}
By \eqref{eqn-H7}, 
\[
\widetilde{F}(R)^2 \leq C_3^2 \widetilde{F}'(R) \cdot R^2 \widetilde{G}'(R).
\]
That is 
\[
\frac{1}{R^2 \widetilde{G}'(R)} \leq C_3^2 \frac{\widetilde{F}'(R)}{(\widetilde{F}(R))^2} = -C_3^2 \left( \frac{1}{\widetilde{F}(R)} \right)', \quad \forall R>1.
\]
Hence for any fixed $r\in (1, R)$, we derive
\begin{equation*}\aligned
\left( \log \frac{R}{r} \right)^2 =& \left( \int_r^{R} \frac{1}{s} ds\right)^2 \\
\leq& \int_{r}^R \frac{1}{s^2 \widetilde{G}'(s)}ds \cdot \int_{r}^R \widetilde{G}'(s)ds \\
\leq & -C_3^2 \left( \frac{1}{\widetilde{F}(R)} - \frac{1}{\widetilde{F}(r)} \right) \cdot (\widetilde{G}(R)-\widetilde{G}(r)) \\
\leq& C_3^2 \frac{\widetilde{G}(R)}{\widetilde{F}(r)}.
\endaligned
\end{equation*}
Let $R$ go to infinity and then $r$ go to infinity, 
\[
\limsup_{R\to \infty} \frac{(\log R)^2}{\widetilde{G}(R)} \leq C_3^2 \liminf_{r\to\infty}\frac{1}{\widetilde{F}(r)}=\dfrac{C_3^2}{\int_{M^m}|\nabla\log w|^2e^{-\frac{z}{4}}}.
\]
Since $M^m$ is not a linear subspace, by \eqref{eqn-H8} we know that $w$ can not be a constant. Hence the right hand side of the above inequality is finite. It follows that
\[
\limsup_{R\to \infty} \frac{(\log R)^2}{\int_{D_R} w^2 (\log w)^2 e^{-\frac{z}{4}}} < \infty.
\]
\end{proof}

 \section{Rigidity results for spacelike self-shrinkers}
 
 In this section, we shall give various rigidity results for spacelike self-shrinkers which can be viewed as the applications of Theorems \ref{thm-C1} and Theorem \ref{thm-RW}. 
 
 \vspace{1em}
 
 By Theorem \ref{thm-C1},  we have
 
 \begin{cor}\label{cor-R6}

Let $X: M^m \to \mathbb{R}^{m+n}_n$ be an $m$-dimensional  spacelike entire self-shrinking graph. Assume that 
the origin $o\in M^m$ and $M^m$ is not a linear subspace. Then  the mean curvature satisfies 
\beq\label{eqn-R9}
\limsup_{R\to \infty} \frac{R^2}{\log\left(\int_{D_R}\|H\|^3w e^{-\frac{z}{4}}\right)}\leq 4\sqrt{m}.
\eeq

 \end{cor}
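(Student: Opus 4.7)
The plan is to derive Corollary \ref{cor-R6} directly from Theorem \ref{thm-C1} by establishing a simple pointwise comparison between the integrand $\|H\|^4 e^{-\frac{z}{4}}$ appearing in \eqref{eqn-H0} and the new integrand $\|H\|^3 w\, e^{-\frac{z}{4}}$. Since $D_R\subset\{z\leq R^2\}$, any polynomial factor in $z$ contributes only a $\log R$ correction on the logarithmic scale and is therefore negligible against $R^2$, so a pointwise estimate with polynomial loss in $z$ will be enough to transfer the asymptotic growth rate.

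The key estimate combines the self-shrinker identity $|X^T|^2=z+4\|H\|^2$ derived in the proof of Theorem \ref{thm-C1} with the graph bound \eqref{eqn-position}, namely $|X^T|\leq C_2\, w(z+1)$. Together these yield $\|H\|^2\leq |X^T|^2/4\leq C\, w^2(z+1)^2$, hence
\[
\|H\|^4=\|H\|^3\cdot \|H\|\leq C\,\|H\|^3\,w\,(z+1)
\]
for some constant $C>0$. Integrating against the weight $e^{-\frac{z}{4}}$ over $D_R$ and using $z\leq R^2$ pointwise on $D_R$,
\[
\int_{D_R}\|H\|^4 e^{-\frac{z}{4}}\leq C(R^2+1)\int_{D_R}\|H\|^3 w\,e^{-\frac{z}{4}}.
\]
By \eqref{eqn-AH0}, $\int_{D_R}\|H\|^4 e^{-\frac{z}{4}}\to\infty$ as $R\to\infty$, so both sides eventually exceed $1$ and passing to logarithms is legitimate.

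Dividing the resulting inequality
\[
\log\int_{D_R}\|H\|^3 w\,e^{-\frac{z}{4}}\geq \log\int_{D_R}\|H\|^4 e^{-\frac{z}{4}}-\log\bigl(C(R^2+1)\bigr)
\]
by $R^2$ and using $\log(R^2+1)/R^2\to 0$ together with the lower bound $\liminf_{R\to\infty} R^{-2}\log \int_{D_R}\|H\|^4 e^{-\frac{z}{4}}\geq 1/(4\sqrt{m})$ (which is the content of Theorem \ref{thm-C1}, once one uses that the logarithm is eventually positive) yields
\[
\liminf_{R\to\infty}\frac{\log \int_{D_R}\|H\|^3 w\,e^{-\frac{z}{4}}}{R^2}\geq \frac{1}{4\sqrt{m}},
\]
which is equivalent to \eqref{eqn-R9}. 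There is essentially no obstacle in this argument; conceptually, Corollary \ref{cor-R6} is strictly a weakening of Theorem \ref{thm-C1}, as the extra factor $w\geq 1$ costs nothing on the logarithmic scale and the loss of one power of $\|H\|$ is compensated, up to a polynomial in $z$, by the pointwise bound $\|H\|\leq C w(z+1)/2$.
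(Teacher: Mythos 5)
Your proposal is correct and takes essentially the same route as the paper: the paper's Corollary~\ref{cor-R6} is derived from the same pointwise bound $\|H\|^4\leq\tfrac12\|H\|^3|X^T|\leq C_2\|H\|^3 w(z+1)$ (the paper's \eqref{eqn-R8}, which rests on $|X^T|^2=z+4\|H\|^2\geq 4\|H\|^2$ together with \eqref{eqn-position}), followed by the observation that the polynomial factor $(z+1)\leq R^2+1$ on $D_R$ is negligible at the logarithmic scale compared to $R^2$. You simply spell out the elementary limsup/liminf bookkeeping that the paper leaves implicit in the sentence ``the conclusion follows from \eqref{eqn-H0} and \eqref{eqn-R8}.''
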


\begin{proof}
\eqref{eqn-position} implies that
  \begin{align}\label{eqn-R8}
    \|H\|^4\leq\dfrac12\|H\|^3|X^T|\leq C_2\|H\|^3w (z+1).
\end{align}
 Then the conclusion follows from  \eqref{eqn-H0} and \eqref{eqn-R8}.
\end{proof}

As a consequence of Corollary \ref{cor-R6}, we obtain

\begin{cor}\label{cor-R0}

Let $X: M^m \to \mathbb{R}^{m+n}_n$ be an $m$-dimensional spacelike entire self-shrinking graph. Assume that the origin $o\in M^m$. If the mean curvature satisfies $\|H\|^2 \leq C e^{\a z}$ for any $\a < \frac{1}{6}+\frac{1}{6\sqrt{m}}$, here $C$ is a positive constant. Then $M^m$ must be a linear subspace.

\end{cor}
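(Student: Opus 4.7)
\textbf{Plan for Corollary \ref{cor-R0}.} I would argue by contradiction. Assume that $M^m$ is a spacelike entire self-shrinking graph through the origin which is not a linear subspace, and satisfies the bound $\|H\|^2\leq Ce^{\alpha z}$ for some $\alpha<\frac16+\frac{1}{6\sqrt m}$. Then Corollary \ref{cor-R6} applies, so
\[
\limsup_{R\to\infty}\frac{R^2}{\log\left(\int_{D_R}\|H\|^3we^{-z/4}\right)}\leq 4\sqrt{m}.
\]
The strategy is to derive a contradictory lower bound for the left-hand side by using the hypothesis on $\|H\|$ together with the volume growth estimate of Theorem \ref{thm-volume} to control $\int_{D_R}\|H\|^3we^{-z/4}$ from above.

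The key computation is the following. Raising the assumption to the $3/2$-power yields $\|H\|^3\leq C^{3/2}e^{3\alpha z/2}$, so
\[
\int_{D_R}\|H\|^3we^{-z/4}\leq C^{3/2}\int_{D_R}we^{\beta z},\qquad \beta:=\tfrac{3\alpha}{2}-\tfrac14.
\]
If $\beta\leq 0$ then by Theorem \ref{thm-volume} the right-hand side stays bounded as $R\to\infty$, so $R^2/\log(\cdot)\to\infty$, immediately contradicting Corollary \ref{cor-R6}. If $\beta>0$ I would apply the Co-Area formula to $\sqrt z$ (which is proper by \eqref{eqn-pseudo} and has $|\nabla\sqrt z|\geq 1$ wherever $X\neq o$) and integrate by parts. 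Setting $V(r):=\int_{\{\sqrt z\leq r\}\cap M^m}w$, Theorem \ref{thm-volume} gives $V(r)=O(r^{2m})$, and
\[
\int_{D_R}we^{\beta z}\leq\int_0^R e^{\beta s^2}\,dV(s)=e^{\beta R^2}V(R)-\int_0^R 2\beta s\,e^{\beta s^2}V(s)\,ds\leq C\,R^{2m}e^{\beta R^2}
\]
for all sufficiently large $R$. Taking logarithms,
\[
\log\left(\int_{D_R}\|H\|^3we^{-z/4}\right)\leq \beta R^2+2m\log R+C',
\]
whence
\[
\liminf_{R\to\infty}\frac{R^2}{\log\left(\int_{D_R}\|H\|^3we^{-z/4}\right)}\geq\frac{1}{\beta}.
\]

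Combining this with Corollary \ref{cor-R6} forces $1/\beta\leq 4\sqrt m$, i.e. $\beta\geq\frac{1}{4\sqrt m}$, which is equivalent to $\alpha\geq\frac16+\frac{1}{6\sqrt m}$, contradicting the hypothesis. This will complete the proof.

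The main technical point to watch is ensuring that the volume estimate of Theorem \ref{thm-volume} is strong enough to absorb the polynomial prefactor $R^{2m}$ in the logarithmic asymptotics; this is fine because $2m\log R/R^2\to 0$ is dominated by $\beta R^2$ in the limit. Everything else is routine, so I expect no serious obstacle beyond organizing the case $\beta>0$ versus $\beta\leq 0$ cleanly.
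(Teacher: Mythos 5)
Your proposal is correct and follows essentially the same strategy as the paper: upper-bound $\int_{D_R}\|H\|^3we^{-z/4}$ using the hypothesis on $\|H\|$ together with the volume control from Theorem \ref{thm-volume}, then contradict Corollary \ref{cor-R6}. The implementation differs cosmetically: the paper splits $e^{\alpha z}=e^{(\alpha-\alpha_0)z}e^{\alpha_0 z}$ for an auxiliary $\alpha_0<\frac16$ chosen with $\alpha<\alpha_0+\frac{1}{6\sqrt m}$, and uses that $\int_{M^m}we^{(\frac{3\alpha_0}{2}-\frac14)z}<\infty$ to absorb the subcritical part, yielding $\log\bigl(\int_{D_R}\|H\|^3we^{-z/4}\bigr)\le\frac{3(\alpha-\alpha_0)}{2}R^2+O(1)$. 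You instead work directly with $\beta=\frac{3\alpha}{2}-\frac14$ and, in the case $\beta>0$, convert the polynomial volume bound from Theorem \ref{thm-volume} into $\int_{D_R}we^{\beta z}\leq CR^{2m}e^{\beta R^2}$ via Co-Area, giving the same $R^2$ growth rate without the auxiliary parameter; the coefficient you obtain is the limit of the paper's as $\alpha_0\to\frac16$, so the thresholds match. Both routes are valid, and your version is marginally more streamlined. The only small point to verify in your write-up, which you have handled, is that $z\geq 0$ on the graph through the origin (so $e^{\beta s^2}$ is the right comparison on $D_R$), and that $2m\log R$ is harmless against $\beta R^2$ in the limit.
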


\begin{proof}
Choose $\alpha_0<\frac{1}{6}$ such that $\alpha<\alpha_0+\frac{1}{6\sqrt{m}}$. 
Suppose that $M^m$ is not a linear subspace. The assumption implies that 
\[
\int_{D_R} \|H\|^3 we^{-\frac{z}{4}} \leq C^{3/2} e^{\frac{3\left(\alpha-\alpha_0\right)}{2}R^2}\int_{D_R} w e^{\left(\frac{3\a_0}{2}-\frac{1}{4}\right)z}.
\]
Since $\a_0<\frac{1}{6}$, we get $\frac{3\a_0}{2}-\frac{1}{4}<0$. Then by Theorem  \ref{thm-volume}, $\int_{M^m} w e^{\left(\frac{3\a_0}{2}-\frac{1}{4}\right)z}< \infty.$   Therefore 
\[
 \limsup_{R\to \infty} \frac{R^2}{\log\left(\int_{D_R}\|H\|^3w e^{-\frac{z}{4}}\right)} \geq\dfrac{2}{3\left(\alpha-\alpha_0\right)}>4\sqrt{m}.
\]
Comparing the above inequality with \eqref{eqn-R9}, we conclude that $M^m$ is a linear subspace.
\end{proof}

By using gradient estimates and Corollary \ref{cor-R0}, we derive

\begin{cor}\label{cor-R1}

Let $X: M^m \to \mathbb{R}^{m+n}_n$ be an $m$-dimensional spacelike entire self-shrinking graph. Assume that the origin $o\in M$ and the $w$-function satisfies
\begin{equation*}
\limsup_{x \to \infty} \frac{\log w}{z} < \dfrac{1}{12}+\dfrac{1}{12\sqrt{m}}.
\end{equation*}
Then $M^m$ has to be a linear subspace.

\end{cor}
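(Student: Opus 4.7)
The plan is to reduce Corollary \ref{cor-R1} to the already-established Corollary \ref{cor-R0} by translating the growth condition on $w$ into an exponential growth bound on $\|H\|^2$. The bridge is the self-shrinker equation \eqref{Def} together with the pointwise inequality \eqref{eqn-position} that was derived inside the proof of Theorem \ref{thm-volume}; this pointwise inequality is what plays the role of the ``gradient estimate'' mentioned in the hint, since it controls the geometric quantity $|X^T|$ by the tilt $w$ and the position.

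First, I would combine $H = -\frac{1}{2}X^N$ with the decomposition $z = |X^T|^2 - \|X^N\|^2$ to obtain $\|H\|^2 = \frac{1}{4}\|X^N\|^2 \leq \frac{1}{4}|X^T|^2$. Plugging \eqref{eqn-position} into this produces the pointwise bound
\[
\|H\|^2 \leq \frac{C_2^2}{4}\,w^2\,(z+1)^2,
\]
in which the mean curvature is dominated by $w$ and a polynomial in $z$.

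Next, set $L := \limsup_{x\to\infty}\frac{\log w}{z}$ and pick any $\gamma$ with $L < \gamma < \frac{1}{12}+\frac{1}{12\sqrt{m}}$. Since $z$ is proper by \eqref{eqn-pseudo}, the limsup condition gives $R_0>0$ with $w^2 \leq e^{2\gamma z}$ whenever $z \geq R_0^2$. Now choose $\epsilon>0$ so small that $\alpha := 2\gamma+\epsilon$ still satisfies $\alpha < \frac{1}{6}+\frac{1}{6\sqrt{m}}$; for $z$ large the polynomial factor $(z+1)^2$ is absorbed into $e^{\epsilon z}$, while on the bounded region $\{z \leq R_0^2\}$ continuous quantities are controlled by a constant. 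Combining the two regimes yields $\|H\|^2 \leq C e^{\alpha z}$ on all of $M^m$, and Corollary \ref{cor-R0} then forces $M^m$ to be a linear subspace.

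The argument is largely bookkeeping once the gradient bound is in hand; the main points to get right are the factor of $2$ between the two corollaries (which corresponds to the shrinker equation giving $\|H\|^2$ in terms of $w^2$ rather than $w$) and the absorption of the polynomial prefactor $(z+1)^2$ into an arbitrarily small exponential loss $e^{\epsilon z}$, which is what makes the exponent $\frac{1}{12}+\frac{1}{12\sqrt{m}}$ strict rather than a limiting value.
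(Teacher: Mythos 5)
Your proposal is correct, and it is a genuinely simpler route than the one the paper takes. The paper derives the exponential bound on $\|H\|^2$ via an interior maximum-principle argument: it runs a test with $\Phi=(a^2-z)^2\|H\|^2$ on $D_a$, using the drift identities $\Delta_V z=2m-z$, $\nabla z=2X^T$, and the inequality $\Delta_V\|H\|^2\geq\frac{2}{m}\|H\|^4$ coming from \eqref{eqn-H2} and \eqref{eqn-A1}, to extract an a priori estimate $\max_{D_{\delta a}}\|H\|^2\leq C(1-\delta^2)^{-2}f(a^2)^2$ with $f(R)=\max_{\{z=R\}}w$. You bypass all of that by noting that on the entire graph through the origin one has $z\geq 0$ pointwise (this is established in the proof of Theorem \ref{thm-volume} via $|u(x)|<|x|$), so that $z=|X^T|^2-\|X^N\|^2\geq 0$ forces $\|X^N\|^2\leq|X^T|^2$, whence the shrinker equation gives $\|H\|^2=\tfrac14\|X^N\|^2\leq\tfrac14|X^T|^2$; then \eqref{eqn-position} converts this into the same kind of bound $\|H\|^2\leq Cw^2(z+1)^2$ at once, and the absorption of $(z+1)^2$ into $e^{\epsilon z}$ and the application of Corollary~\ref{cor-R0} close the argument exactly as in the paper. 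The one place you should be explicit is the step $\|X^N\|^2\leq|X^T|^2$: it is not a general fact about spacelike submanifolds but uses $z\geq0$, which in turn relies on the graph passing through the origin; since that hypothesis is in force here, the step is justified, but it deserves a sentence. The paper's maximum-principle route is heavier machinery than what is needed for this corollary, and arguably the authors could have used your shortcut; the upside of their argument is that it only uses the value of $w$ on the level set $\{z=a^2\}$ rather than pointwise, which is the sort of localization that matters in some refinements, though it buys nothing extra for the stated result.
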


\begin{proof}
Let $f(R):= \max_{\{z=R\}} w$. 
Then by \eqref{eqn-H8} and the maximum principle, $f(R)$ is nondecreasing in $R$.  From the assumption, there exists $R_0>0$, such that when $R>R_0$, we have
\[
f(R) < e^{\left(\frac{1}{12}+\frac{1}{12\sqrt{m}}\right)(1-\varepsilon)R} \quad {\rm for} \quad {\rm some} \quad \epsilon >0.
\]
Choosing $a^2 > 2 R_0$. Let $B_a := \{ p\in \mathbb{R}^{m+n}_n: z(p)\leq a^2 \}$ and $D_a:= M^m\cap B_a$.  By (\ref{eqn-pseudo}), $z$ is proper, this implies that $D_a$ is compact in $M^m$.
 Define $\Phi: D_a \to \mathbb{R}$ by 
\[
\Phi:= (a^2-z)^2 \|H\|^2.
\]
 As $\Phi|_{\p D_a}=0$, $\Phi$ achieves an absolute maximum in the interior of $D_a$, say $\Phi \leq \Phi(q)$,
for some $q$ inside $D_a$.  We may  assume $\|H\|(q)\neq 0$. Then 
\[
\n \Phi(q)=0,\quad \D_{V}\Phi (q) \leq 0.
\]
By direct computation, we have 
\begin{equation*}\aligned
\n \Phi=  -2(a^2-z)\|H\|^2 \n z +(a^2-z)^2 \n\|H\|^2,
\endaligned
\end{equation*}
\begin{equation*}\aligned
\D_{V} \Phi= &  2\|H\|^2 \cdot |\n z|^2 - 2(a^2-z)\|H\|^2 \cdot \D_V z - 4(a^2-z)\left\la \n z, \n\|H\|^2 \right\ra +(a^2-z)^2 \D_V\|H\|^2.
\endaligned
\end{equation*}
From $\n \Phi (q)= 0$, we get at $q$
\begin{equation}\label{eqn-R1}
\frac{\n\|H\|^2}{\|H\|^2} - \frac{2\n z}{a^2-z} =0
\end{equation}
And by $\D_{V}\Phi(q) \leq 0$, we obtain at $q$
\beq\label{eqn-R2}\aligned
-\frac{4\la\n z, \n \|H\|^2\ra}{(a^2-z)\|H\|^2} & + \frac{\D_{V}\|H\|^2}{\|H\|^2} + \frac{2|\n z|^2}{(a^2-z)^2} -\frac{2\D_{V}z}{a^2-z} \leq 0.
\endaligned
\eeq
Substituting  \eqref{eqn-R1} into \eqref{eqn-R2}, we get
\begin{equation}\label{eqn-R3}\aligned
\frac{\D_{V}\|H\|^2}{\|H\|^2}  -\frac{2\D_{V}z}{a^2-z} - \frac{6|\n z|^2}{(a^2-z)^2} \leq 0.
 \endaligned
\end{equation}
Direct computation gives us 
\[
\D_V z= 2m-z, \quad \n z = 2X^T.
\]
Combining \eqref{eqn-H2}, \eqref{eqn-A1} with \eqref{eqn-R3}, we derive
\begin{equation}\label{eqn-R4}\aligned
\|H\|^2 \leq & \frac{m}{2}\frac{\D_V\|H\|^2}{\|H\|^2} \leq \frac{m}{2}\left(\frac{2\D_{V}z}{a^2-z} + \frac{6|\n z|^2}{(a^2-z)^2} \right) 
\leq  \frac{m}{2}\left(\frac{4m}{a^2-z} + \frac{24|X^T|^2}{(a^2-z)^2} \right)
\endaligned
\end{equation}
By \eqref{eqn-position} and \eqref{eqn-R4}, we get
\begin{equation*}
\aligned
\|H\|^2 
\leq  \frac{m}{2}\left(\frac{4m}{a^2-z} + \frac{48C_2^{2}w^2(z+1)^2}{(a^2-z)^2} \right).
\endaligned
\end{equation*}
Hence for $\delta=\sqrt{1-\varepsilon^2}$
\begin{equation}\label{eqn-A3}\aligned
\max_{D_{\delta a}}\Phi\leq \Phi(q)\leq  \frac{m}{2}\left(4ma^2 + 48C_{2}^2(a^2+1)^2f(a^2)^2  \right).
\endaligned
\end{equation}
By the definition of $\Phi$ and \eqref{eqn-A3}, we conclude that some constant $C$ such that for $a$ sufficiently large we have
\begin{align*}
    \max_{D_{\delta a}}\|H\|^2\leq   \dfrac{C}{(1-\delta^2)^2}f(a^2)^2\leq\dfrac{C}{\varepsilon^4}e^{\left(\frac{1}{6}+\frac{1}{6\sqrt{m}}\right)(1-\varepsilon)a^2}=\dfrac{C}{\varepsilon^4}e^{\left(\frac{1}{6}+\frac{1}{6\sqrt{m}}\right)\frac{(\delta a)^2}{1+\varepsilon}}.
\end{align*}
Hence for every $q'\in M^m$ with $z(q')=\delta a$,
\begin{align*}
    \|H\|^2(q')\leq \dfrac{C}{\varepsilon^4}e^{\left(\frac{1}{6}+\frac{1}{6\sqrt{m}}\right)(1-\varepsilon)z(q')}.
\end{align*}
In other words, we get the following estimate
\begin{align*}
    \|H\|^2\leq C_{\varepsilon}e^{\left(\frac{1}{6}+\frac{1}{6\sqrt{m}}\right)(1-\varepsilon)z}.
\end{align*}
By Corollary \ref{cor-R0}, $M^m$ is a linear subspace.
\end{proof}

By using Theorem \ref{thm-RW} and Theorem \ref{thm-volume}, we can improve Corollary \ref{cor-R1} as follows

\begin{cor}\label{cor-R2}

Let $X: M^m \to \mathbb{R}^{m+n}_n$ be an $m$-dimensional spacelike entire self-shrinking graph. Assume that the origin $o\in M^m$ and the $w$-function satisfies
\begin{equation*}
\limsup_{x \to \infty} \frac{\log w}{z} < \frac{1}{4}.
\end{equation*}
Then $M^m$ has to be a linear subspace.
\end{cor}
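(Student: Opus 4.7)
\textbf{Proof proposal for Corollary \ref{cor-R2}.} The plan is to argue by contradiction and combine the conclusion of Theorem \ref{thm-RW} with the volume growth estimate of Theorem \ref{thm-volume} applied to the explicit pointwise bound on $w$ coming from the hypothesis. So assume $M^m$ is not a linear subspace. Then Theorem \ref{thm-RW} gives
\[
\limsup_{R\to\infty}\frac{(\log R)^2}{\int_{D_R} w^2 (\log w)^2 e^{-z/4}}<\infty,
\]
which forces $\int_{D_R} w^2(\log w)^2 e^{-z/4}\to\infty$ as $R\to\infty$, since $(\log R)^2\to\infty$. The entire plan is to contradict this by showing that the hypothesis actually makes $\int_{M^m} w^2(\log w)^2 e^{-z/4}$ finite.

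The key step is to split the integrand as $w^2(\log w)^2 e^{-z/4}=w\cdot \bigl(w(\log w)^2 e^{-z/4}\bigr)$ and to control the second factor using the growth bound on $w$. From the assumption, pick $\alpha$ with $\limsup_{x\to\infty}(\log w)/z<\alpha<1/4$, so that there is $R_0>0$ with $\log w\leq \alpha z$, and hence $w\leq e^{\alpha z}$, on $\{z\geq R_0\}\cap M^m$. Substituting both bounds in, for $z\geq R_0$,
\[
w^2(\log w)^2 e^{-z/4}\leq \alpha^2 z^2\,w\, e^{(\alpha-1/4)z}.
\]
Since $\alpha-1/4<0$, choosing $\beta:=(1/4-\alpha)/2>0$ we have $z^2 e^{(\alpha-1/4+\beta)z}\leq C_0$ for some $C_0>0$ depending only on $\alpha$, so that the right-hand side is bounded by $C_0\alpha^2\,w\,e^{-\beta z}$ on $\{z\geq R_0\}\cap M^m$.

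Now Theorem \ref{thm-volume} gives $\int_{M^m} w e^{-\beta z}<\infty$, so the integral over $\{z\geq R_0\}\cap M^m$ of $w^2(\log w)^2 e^{-z/4}$ is finite; on the complement $\{z\leq R_0\}\cap M^m$ the properness of $z$ (see \eqref{eqn-pseudo}) makes the set compact, on which $w^2(\log w)^2 e^{-z/4}$ is continuous and hence integrable. Therefore $\int_{M^m} w^2(\log w)^2 e^{-z/4}<\infty$, contradicting the divergence of $\int_{D_R} w^2(\log w)^2 e^{-z/4}$ noted above. Hence $M^m$ must be a linear subspace. The only genuinely delicate point is to verify that the polynomial $z^2$ factor coming from $(\log w)^2$ can be absorbed into a slightly stronger exponential decay without damaging the application of Theorem \ref{thm-volume}; this is the reason one needs to split off one copy of $w$ from $w^2$ rather than trying to bound $w^2 e^{-z/4}$ directly (which would only be integrable for $\alpha<1/8$).
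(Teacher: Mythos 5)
Your proof is correct, and it reaches the paper's contradiction (finiteness of $\int_{M^m} w^2(\log w)^2 e^{-z/4}$ against Theorem~\ref{thm-RW}) by a somewhat cleaner route. The paper first introduces $f(R)=\max_{\{z=R\}}w$, invokes the differential inequality \eqref{eqn-H8} together with the maximum principle to conclude that $f$ is nondecreasing, and then splits into the cases $f$ bounded / $f\to\infty$; in the unbounded case it controls the logarithm via $\log f(R)\leq f(R)^{\epsilon/2}$, so that $w(\log w)^2\leq f(z)^{1+\epsilon}\leq e^{\frac{1-\epsilon^2}{4}z}$ and one copy of $w$ survives for Theorem~\ref{thm-volume}. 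You instead bound $(\log w)^2\leq\alpha^2 z^2$ directly from the hypothesis, split $w^2=w\cdot w$ with $w\leq e^{\alpha z}$, and absorb the polynomial $z^2$ into a small additional exponential margin $\beta=(1/4-\alpha)/2$. This dispenses with the maximum principle and the dichotomy entirely, at the cost of a (trivial) observation that $z^2 e^{-\beta' z}$ is bounded. Both arguments exploit the same essential point — peel off a single factor of $w$ so that Theorem~\ref{thm-volume} applies with the remaining decay strictly better than $e^{-z/4}$ — and you flag this correctly at the end. The only thing left implicit in your write-up is that $\int_{M^m}w^2(\log w)^2e^{-z/4}>0$ (equivalently $w\not\equiv1$) when $M^m$ is not a linear subspace, which is what makes the quotient in Theorem~\ref{thm-RW} meaningful; this is immediate from \eqref{eqn-H8} and worth a sentence, but it does not affect the validity of the argument.
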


\begin{proof} 
Suppose  that $M^m$ is not a linear subspace.
Let $f(R)=\max_{\{z=R\}}w$, since $\Delta_{V}\log w \geq \frac{\|B\|^2}{w^2}> 0$, then the maximum principle implies that $f(R)$ is nondecreasing in $R$. If $f$ is bounded by some positive constant, then by using Theorem \ref{thm-volume} and the assumption, we have
\beq\label{eqn-finite}
\int_{M^m} w^2 (\log w)^2 e^{-\frac{z}{4}} < \infty.
\eeq
Otherwise, $\lim\limits_{R\to \infty} f(R) = \infty$, then for any $\epsilon>0$, we obtain $\log f(R) \leq f(R)^{\frac{\epsilon}{2}}$ when $R$ is large. Therefore by the assumption, we can conclude that 
\begin{align*}
   f(R) < e^{\frac{1-\epsilon}{4}R}
\end{align*}
for $R$ large enough. It follows that 
\begin{align*}
  f(R)(\log f(R))^2\leq f(R)^{1+\epsilon} \leq e^{\frac{1-\epsilon^2}{4}R}, \quad {\rm for} \quad R \quad {\rm large}.  
\end{align*}
Then by Theorem \ref{thm-volume} again, we can also obtain \eqref{eqn-finite}. Since $M^m$ is not a linear subspace, by \eqref{eqn-H8}, $w$ can not be a constant, in particular, $w \not\equiv 1$, that is, $\int_{M^m} w^2 (\log w)^2 e^{-\frac{z}{4}} \neq 0$. Therefore, we have
\[
\limsup_{R\to \infty} \frac{(\log R)^2}{\int_{D_R} w^2 (\log w)^2 e^{-\frac{z}{4}}} = \infty.
\]
This is a contradiction with \eqref{eqn-H10}. 
\end{proof}

By \eqref{eqn-w-function}, Corollary \ref{cor-R2} can be rewritten as

\begin{cor}\label{cor-R3}

Let $M^m:= \{ \left. (x, u(x)) \right| x \in \mathbb{R}^m, u=(u^1, u^2, ..., u^n)  \}$ be an $m$-dimensional  spacelike entire self-shrinking graph in $\mathbb{R}^{m+n}_n$. Assume the origin $o \in M^m$ and the induced metric $(g_{ij})$ satisfies
\begin{align*}
\liminf_{|x|\to \infty} \dfrac{\log \det(g_{ij}(x))}{|x|^2-|u(x)|^2} > -\dfrac{1}{2},
\end{align*}
where $g_{ij}(x) =\d_{ij} -\sum_{\alpha=1}^n u^\a_{i}(x)u^{\a}_j(x)$.
Then $M^m$ is a linear subspace.
\end{cor}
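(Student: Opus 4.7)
The plan is to observe that Corollary \ref{cor-R3} is a direct reformulation of Corollary \ref{cor-R2} via the identity \eqref{eqn-w-function}, so the proof amounts to translating the hypothesis and invoking that corollary. Explicitly, \eqref{eqn-w-function} gives $w = 1/\sqrt{\det(g_{ij})}$, hence
\[
\log w \;=\; -\tfrac{1}{2}\log\det(g_{ij}),
\]
while for a spacelike entire graph $X=(x,u(x))$ one has $z = \langle X, X\rangle = |x|^2_{\mathbb{R}^m} - |u(x)|^2_{\mathbb{R}^n}$. Combining these,
\[
\frac{\log w}{z} \;=\; -\,\frac{1}{2}\cdot\frac{\log\det(g_{ij}(x))}{|x|^2 - |u(x)|^2}.
\]
Therefore the hypothesis $\liminf_{|x|\to\infty}\tfrac{\log\det(g_{ij})}{|x|^2 - |u(x)|^2} > -\tfrac{1}{2}$ is exactly equivalent to $\limsup_{|x|\to\infty}\tfrac{\log w}{z} < \tfrac14$.

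The one minor point to verify is that ``$|x|\to\infty$'' on the base and ``going to infinity on $M^m$'' describe the same limit, so that the rewritten assumption really matches the hypothesis of Corollary \ref{cor-R2}. This is immediate from the properness estimate \eqref{eqn-pseudo}: since $|x|\leq C_1(z+1)$ on $M^m$ and conversely $z\leq|x|^2$, the functions $|x|$ and $z$ are simultaneously proper, so both limits $|x|\to\infty$ and $z\to\infty$ describe the same exhaustion of $M^m$ via the graph parametrization.

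Once this translation is in place, Corollary \ref{cor-R2} applies directly and forces $M^m$ to be a linear subspace, completing the proof. I do not foresee any real obstacle here: the statement is essentially a cosmetic rewriting that makes the growth condition more transparent in the graphical coordinates, and both the identification $\log w = -\tfrac12\log\det(g_{ij})$ and the proper-exhaustion remark have already been established earlier in the paper.
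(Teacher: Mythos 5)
Your proposal is correct and matches the paper's own (implicit) argument: the paper simply states that Corollary \ref{cor-R3} is a rewriting of Corollary \ref{cor-R2} via \eqref{eqn-w-function}. Your additional remark that $|x|\to\infty$ and $z\to\infty$ describe the same exhaustion, via \eqref{eqn-pseudo}, is a small but worthwhile clarification that the paper leaves tacit.
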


We are now in position to show that Corollary \ref{cor-R3} improves Theorem 3 in \cite{DW} as follows

\begin{cor}[\cite{DW}]\label{cor-DW}
Let $M^m:= \{ \left. (x, u(x)) \right| x \in \mathbb{R}^m, u=(u^1, u^2, ..., u^n)  \}$ be an $m$-dimensional  spacelike entire self-shrinking graph in $\mathbb{R}^{m+n}_n$. Assume the origin $o \in M^m$ and the induced metric $(g_{ij})$ satisfies
\begin{align}\label{eq:DW}
\lim_{|x|\to \infty} \dfrac{\log {\rm det} (g_{ij}(x))}{|x|}=0,
\end{align}
where $g_{ij}(x) =\d_{ij} -\sum_{\alpha=1}^n u^\a_{i}(x)u^{\a}_j(x)$. Then $M$ is a linear subspace.
\end{cor}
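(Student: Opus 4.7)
The plan is to reduce Corollary \ref{cor-DW} directly to Corollary \ref{cor-R3} by using the comparison between the Euclidean norm $|x|$ and the pseudo-distance $z = |x|^2 - |u(x)|^2$ that is already established in the paper. Concretely, I want to show that the hypothesis \eqref{eq:DW} on decay with respect to $|x|$ is strictly stronger than the hypothesis of Corollary \ref{cor-R3} on decay with respect to $z$.

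First I would invoke the key inequality \eqref{eqn-pseudo} proved in Section~3, namely
\[
|x| \leq C_1(z+1),
\]
valid on the entire graph. In particular, as $|x|\to\infty$ we have $z\to\infty$ (and conversely, since $z \leq |x|^2$). This is the only non-trivial ingredient: translating between the two natural "radial" functions on a spacelike entire graph.

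Next, using that $M^m$ is spacelike so $\det(g_{ij}) \in (0,1]$ and $\log\det(g_{ij}) \leq 0$, I would argue that the assumption \eqref{eq:DW} implies
\[
\frac{\log\det(g_{ij}(x))}{z} = \frac{\log\det(g_{ij}(x))}{|x|} \cdot \frac{|x|}{z} \longrightarrow 0 \quad \text{as } |x| \to \infty,
\]
because the first factor tends to $0$ by hypothesis and the second factor is bounded above by $C_1(z+1)/z$, which stays bounded as $z \to \infty$. In particular,
\[
\liminf_{|x|\to\infty}\frac{\log\det(g_{ij}(x))}{|x|^2 - |u(x)|^2} = 0 > -\frac{1}{2}.
\]

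Finally, I would apply Corollary \ref{cor-R3} to conclude that $M^m$ is a linear subspace. The entire argument is essentially a one-line comparison between the two growth scales, and I do not expect any serious obstacle: the heavy lifting has already been done in Section~3 (the proper\-ness of $z$ and the inequality \eqref{eqn-pseudo}) and in the proof of Corollary \ref{cor-R3}. The only point to watch is justifying that $|x|\to\infty$ and $z\to\infty$ are equivalent, which is immediate from $z\leq|x|^2$ and $|x|\leq C_1(z+1)$.
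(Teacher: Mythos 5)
Your proposal is correct and follows essentially the same route as the paper: both reduce Corollary~\ref{cor-DW} to Corollary~\ref{cor-R3} via the comparison $|x|\leq C_1(z+1)$ from \eqref{eqn-pseudo} and the sign information $\log\det(g_{ij})\leq 0$. Your phrasing (showing the limit of $\log\det(g_{ij})/z$ is exactly $0$) is in fact slightly cleaner than the paper's choice of $\varepsilon=\tfrac{1}{4C_1}$, which only gives the non-strict bound $-\tfrac12$ on the liminf and really requires taking $\varepsilon$ a bit smaller.
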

\begin{proof}Since $\det(g_{ij})<1$, Ding-Wang's assumption \eqref{eq:DW} implies that for every positive constant $\varepsilon$, we have
\begin{align*}
    \dfrac{\log \det (g_{ij})}{|x|}>-\varepsilon,\quad \text{as}\ |x|\to\infty.
\end{align*}
By (\ref{eqn-pseudo}), the function $z$ is proper, choosing $\varepsilon=\frac{1}{4C_1}$, then we obtain
\begin{align*}
    \dfrac{\log \det (g_{ij})}{z}\geq 2C_1\dfrac{\log \det (g_{ij})}{|x|}>-2C_1\varepsilon=-\dfrac{1}{2},\quad \text{as}\ |x|\to\infty.
\end{align*}
Then this Corollary follows from Corollary \ref{cor-R3}.
\end{proof}

\begin{rem}
If $m=1$, then the growth condition is not necessary. In other words, the only entire graphic spacelike self-shrinking curve through the origin in the pseudo-Euclidean space $\mathbb{R}^{1+n}_n$ has to be a linear subspace. In fact, assume that $M^1=\{(t,u^1(t),\dotsc,u^n(t)): t\in\mathbb{R}\}$ is a spacelike self-shrinking curve, then
\begin{align}\label{eq:spacelikecurve}
    \dfrac{u^{\alpha}_{tt}}{1-\sum_{\beta=1}^nu^{\beta}_tu^{\beta}_t}=\dfrac{1}{2}\left(tu_{t}^{\alpha}-u^{\alpha}\right),\quad\forall t\in\mathbb{R},\ \quad \alpha=1,\dotsc,n.
\end{align}
Since $M^1$ contains the origin, we have $u^{1}(0)=\dotsm=u^{n}(0)=0$. Denote by $u_{t}^{\alpha}(0)=a^{\alpha}, \alpha=1,\dotsc,n$, then $\{u^{\alpha}(t)=a^{\alpha}t, \alpha=1,\dotsc,n\}$ is a solution to \eqref{eq:spacelikecurve} and $M^1$ is a linear subsapce.   By the uniqueness theorem of ODE system, we know that $M^1$ has to be a linear subspace.   
\end{rem}

At the end of this section, we shall give a nontrivial  spacelike entire self-shrinking graph which does not contain the origin (cf. \cite{HOW}). 

\begin{eg} Consider a $C^2$ function $u: \mathbb{R} \to \mathbb{R}$ satisfying 
\beq\label{eqn-ODE}
\dfrac{u''}{1-u'^{2}} = \dfrac{1}{2}(tu'-u), \quad |u'|<1.
\eeq
If we find a nontrivial solution $u$ to \eqref{eqn-ODE}, i.e., $u$ is not a linear function, then
\begin{align*}
    M^m=\left\{\left(x_1,x_2,\dotsc,x_m,u(x_1),0,\dotsc,0\right)\in\mathbb{R}_n^{m+n}: \left(x_1,x_2,\dotsc,x_m\right)\in\mathbb{R}^m\right\}
\end{align*}
is a nontrivial entire spacelike self-shrinking graph in $\mathbb{R}^{m+n}_n$, i.e., this graph is not an affine plane. According to Chen-Qiu's result (\cite{CQ}), this entire graphic self-shrinker can not be complete.

Indeed, we consider the following ODE
\begin{equation*} 
\left\{
         \begin{array}{rcl}
           \ds\vs & w''=  \frac{1}{2}(tw'-w)(1-w'^2),\\
     \ds & w(0)= a\in(-\infty,0), \quad w'(0)=b \in (-1, 1).
         \end{array}
           \right.
\end{equation*}
Assume the maximal existence interval is $(-T_1, T_2)$ with $T_1, T_2 \in (0, \infty]$ such that $|w'(t)|< 1, \forall T\in (-T_1, T_2)$. Set $\phi=tw'-w$, then
\begin{align*}
    \phi'=\dfrac{t}{2}\left(1-w'^2\right)\phi.
\end{align*}
Consider a function $f'=\frac{t}{2}\left(1-w'^2\right), f(0)=0$, then $f\geq0$ and $e^{-f}\phi$ is a contant. In particular $\phi\geq-a$. Thus
\begin{align*}
    \left(\tanh^{-1}w'\right)'=\dfrac{1}{2}\left(tw'-w\right)\geq-\dfrac{a}{2}.
\end{align*}
For $t\in[0,T_2)$, we have
\begin{align}\label{eqn-ODE+1}
    w'\geq\tanh\left(-\dfrac{at}{2}+\tanh^{-1}b\right)=-\tanh\left(\dfrac{at}{2}-\tanh^{-1}b\right),
\end{align}
which implies
\begin{align*}
    w\geq&-\dfrac{2}{a}\log\cosh\left(-\dfrac{at}{2}+\tanh^{-1}b\right)+\dfrac{2}{a}\log\cosh\left(\tanh^{-1}b\right)+a\\
    \geq&-\dfrac{1}{a}\left(\log\cosh\left(-at\right)+\log\cosh\left(2\tanh^{-1}b\right)\right)+\dfrac{2\tanh^{-1}|b|}{a}+a\\
    \geq& t+\dfrac{2}{a}\log 2+a.
\end{align*}
Hence $\phi\leq -a-\frac{2}{a}\log 2$. Thus
\begin{align*}
    \left(\tanh^{-1}w'\right)'\leq-\dfrac{a}{2}-\dfrac{1}{a}\log 2,
\end{align*}
which implies
\begin{align}\label{eqn-ODE+2}
    w'\leq\tanh\left(\left(-\dfrac{a}{2}-\dfrac{1}{a}\log 2\right)t+\tanh^{-1}b\right).
\end{align}
Then \eqref{eqn-ODE+1} and \eqref{eqn-ODE+2} implies that $T_2=+\infty$.

For $t\in(-T_1,0)$, a similar argument gives
\begin{align*}
    -\tanh\left(\left(\dfrac{a}{2}+\dfrac{1}{a}\log 2\right)t-\tanh^{-1}b\right)\leq w'\leq\tanh\left(-\dfrac{at}{2}+\tanh^{-1}b\right),
\end{align*}
which implies that $T_1=-\infty$.

\end{eg}

\vskip12pt

The above example implies that for $|t|>-\frac{a}{2}$,
\begin{align*}
    \dfrac{\log\left(1-w'^2\right)}{t^2-w^2}\geq\dfrac{\log\left(1-w'^2\right)}{-2a|t|-a^2}\geq\dfrac{2\log\cosh\left(-\left(\frac{a}{2}+\frac{1}{a}\log 2\right)|t|+\tanh^{-1}|b|\right)}{2a|t|+a^2},
\end{align*}
and we get
\begin{align*}
    \liminf_{|t|\to\infty}\dfrac{\log\left(1-w'^2\right)}{t^2-w^2}\geq-\dfrac{1}{2}-\dfrac{\log 2}{a^2}\to-\dfrac{1}{2},\quad\text{as}\ a\to-\infty.
\end{align*}
Motivated by Corollary \ref{cor-R3} and the above example, we would like to propose the following
\begin{conj}
Let $u=\left(u^1, u^2, ..., u^n\right)$ be an entire smooth solution to
\begin{align*}
    \sum_{i,j=1}^mg^{ij}(x)u^{\alpha}_{ij}(x)=\dfrac12\left(\sum_{i=1}^{m}x_iu_i^{\alpha}(x)-u^{\alpha}(x)\right),\quad x\in\mathbb{R}^m,\quad \alpha=1,\dotsc,n,
\end{align*}
where $g_{ij}(x) =\d_{ij} -\sum_{\alpha=1}^n u^\a_{i}(x)u^{\a}_j(x)$ and $\left(g^{ij}(x)\right)_{1\leq i, j\leq m}$ is the inverse matrix of $\left(g_{ij}(x)\right)_{1\leq i, j\leq m}$. Assume $u^{1}(0)=\dotsm= u^m(0)=0$ and
\begin{align*}
\liminf_{|x|\to \infty} \dfrac{\log \det(g_{ij}(x))}{|x|^2-|u(x)|^2}\geq -\dfrac{1}{2},
\end{align*}
then $u^{\alpha}(x)$ are linear functions for each $\alpha=1,\dotsc,n$.
\end{conj}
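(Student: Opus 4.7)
The plan is to reduce the conjecture to the borderline version of Corollary \ref{cor-R3}. By \eqref{eqn-w-function}, the hypothesis is equivalent to
\[
\limsup_{x\to\infty}\frac{\log w(x)}{z(x)}\leq\frac14,
\]
and the strict case $<\frac14$ is already treated by Corollary \ref{cor-R2}. The remaining task is the equality case, which I would approach by contradiction, assuming $M^m$ is not a linear subspace.

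In this borderline regime the argument of Corollary \ref{cor-R2} collapses. When $\log w \sim z/4$ at infinity one has $w^2(\log w)^2 e^{-z/4} \sim z^2 e^{z/4}$, so the integral $\widetilde G(R)=\int_{D_R} w^2(\log w)^2 e^{-z/4}$ can grow exponentially in $R^2$; the ratio $(\log R)^2/\widetilde G(R)$ then tends to zero, which is compatible with Theorem \ref{thm-RW} and yields no contradiction. My first line of attack would be to sharpen Theorem \ref{thm-RW} by inserting a corrective factor in the weight---for instance, replacing $w^2$ by a subcritical power $w^{1-\beta}$, or inserting an extra damping factor $w^{-2\epsilon}$ for small $\epsilon>0$---and repeating the drift-Laplacian integration-by-parts performed in \eqref{eqn-H9}--\eqref{eqn-H7}. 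Under the borderline condition the modified integrand should decay fast enough that its integral over $M^m$ is finite, at which point a suitable analogue of Theorem \ref{thm-RW} would contradict non-linearity exactly as in the proof of Corollary \ref{cor-R2}.

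The main obstacle will be controlling the extra terms this substitution introduces into the integration by parts. The Bochner-type inequality $\Delta_V \log w \geq \|B\|^2/w^2$ stated in \eqref{eqn-H8} is cleanly stated for $\log w$, and replacing $\log w$ by a nonlinear function of $w$ produces cross terms involving $\nabla w$ and $\Delta_V w$ which do not obviously have a sign. Extracting the needed cancellation will likely require a sharper Bochner-type identity for $w$ itself (or a power of $w$), or an iteration that blends \eqref{eqn-H8} with the gradient-estimate scheme of Corollary \ref{cor-R1} applied right at the critical exponent, together with the sharp volume estimate of Theorem \ref{thm-volume}; this technical point is where the real difficulty lies.

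A complementary strategy I would pursue in parallel exploits the explicit example at the end of Section 5, which saturates the borderline rate $-\frac12$ but crucially does not pass through the origin. If one can prove a splitting theorem asserting that any spacelike entire self-shrinking graph achieving $\limsup\log w/z = \frac14$ must decompose isometrically as the Riemannian product of a linear subspace with a 1D model of the type constructed in that example, then the hypothesis $u^\alpha(0)=0$---which the 1D model cannot satisfy---would immediately force $M^m$ to be linear. Establishing such a splitting via a de Rham-type argument in the shrinker setting, or via an asymptotic tangent flow analysis at infinity, is perhaps the most geometrically natural route, though still nontrivial.
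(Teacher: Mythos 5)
This is an open \emph{conjecture} in the paper, not a theorem: the authors do not give a proof, and indeed offer it precisely because their methods stop short of the borderline case. Your write-up is not a proof either, and to your credit you say so: you describe two candidate strategies and then flag that each has an unresolved core difficulty.

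Your diagnosis of where the existing machinery fails is accurate. Under the borderline assumption $\limsup \log w / z = \frac14$, the weight $w^2 (\log w)^2 e^{-z/4}$ behaves like $z^2 e^{z/4}$, so $\widetilde G(R)$ in the proof of Theorem \ref{thm-RW} can grow like $e^{cR^2}$ and the conclusion $(\log R)^2 / \widetilde G(R) \to 0$ carries no information. This is exactly the obstruction. However, both proposed repairs are speculative and neither is carried out. For the first (replacing $w^2$ by a subcritical power or inserting a damping factor $w^{-2\epsilon}$), you would need a Bochner-type inequality for a nonlinear function of $w$ other than $\log w$, and as you note the cross terms involving $\nabla w$ and $\Delta_V w$ have no obvious sign; no candidate inequality is produced, and the integration-by-parts scheme of \eqref{eqn-H9}--\eqref{eqn-H7} is not re-derived for the modified weight. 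For the second (a splitting theorem forcing a product with the one-dimensional model that cannot pass through the origin), no rigidity mechanism is identified: the example in Section~5 shows the borderline rate is \emph{attainable}, but nothing in the paper suggests it is \emph{uniquely} attained up to splitting, and a de Rham-type argument or a tangent-flow-at-infinity analysis in the pseudo-Riemannian shrinker setting would be a substantial new result in its own right. In short, the gap is not a technical detail but the entire content of the conjecture; what you have written is a reasonable research sketch, not a proof, and should not be presented as one.
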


\vskip24pt


\end{document}